\numberwithin{equation}{section}
\newtheorem{theorem}{Theorem}[section]
\newtheorem{corollary}{Corollary}[theorem]
\newtheorem{remark}{Remark}[theorem]
\newtheorem{lemma}{Lemma}[section]
\newcommand{\thickhline}{%
    \noalign {\ifnum 0=`}\fi \hrule height 1pt
    \futurelet \reserved@a \@xhline
}
\newcolumntype{"}{@{\hskip\tabcolsep\vrule width 1pt\hskip\tabcolsep}}
\newcommand{\doublespacing}{\let\CS=\@currsize
 \renewcommand{\baselinestretch}{1.05}\tiny\CS}
\date{ }
\begin{document}
\newcommand{\bea}{\begin{eqnarray}}
\newcommand{\eea}{\end{eqnarray}}
\newcommand{\nn}{\nonumber}
\newcommand{\bee}{\begin{eqnarray*}}
\newcommand{\eee}{\end{eqnarray*}}
\newcommand{\lb}{\label}
\newcommand{\nii}{\noindent}
\newcommand{\ii}{\indent}
\newtheorem{thm}{Theorem}[section]
\newtheorem{lem}{Lemma}[section]
\newtheorem{rem}{Remark}[section]
\theoremstyle{remark}
\renewcommand{\theequation}{\thesection.\arabic{equation}}
%\renewcommand{\theequation}{2.\arabic{equation}}
%\doublespacing
\vspace{5cm}
%{\bf Estimation and Prediction for a Progressive Hybrid Type I Censored Burr XII Distribution }
\title{\bf Estimation of the Selected Treatment Mean in Two Stage Drop-the-Losers Design}
\author{Masihuddin$^{1}$\thanks{Email : masih.iitk@gmail.com, masihst@iitk.ac.in}
	~~	
	and~ Neeraj Misra$^{2}$\thanks{Email : neeraj@iitk.ac.in}}
\maketitle \noindent {\it $^{1,2}$ Department of Mathematics \& Statistics, Indian
Institute of Technology Kanpur, Kanpur-208016, Uttar Pradesh, India} %\\  {\it $^{2}$ Department of Statistics, Patna University, Patna-800005, India}

\newcommand{\oddhead}{ Estimation in Two-Stage DLD}
\renewcommand{\@oddhead}
{\hspace*{-3pt}\raisebox{-3pt}[\headheight][0pt]
{\vbox{\hbox to \textwidth
{\hfill\oddhead}\vskip8pt}}}
\vspace*{0.05in}
\noindent {\bf Abstract}: 
A common problem faced in clinical studies is that of estimating the effect of the most effective (e.g., the one having the largest mean) treatment among $k~(\geq2)$ available treatments. The most effective treatment is adjudged based on numerical values of some statistic corresponding to the $k$ treatments. A proper design for such problems is the so-called ``Drop-the-Losers Design (DLD)". We consider two treatments whose effects are described by independent Gaussian distributions having different unknown means and a common known variance. To select the more effective treatment, the two treatments are independently administered to $n_1$ subjects each and the treatment corresponding to the larger sample mean is selected. To study the effect of the adjudged more effective treatment (i.e., estimating its mean), we consider the two-stage DLD in which $n_2$ subjects are further administered the adjudged more effective treatment in the second stage of the design. We obtain some admissibility and minimaxity results for estimating the mean effect of the adjudged more effective treatment. The maximum likelihood estimator is shown to be minimax and admissible. We show that the uniformly minimum variance conditionally unbiased estimator (UMVCUE) of the selected treatment mean is inadmissible and obtain an improved estimator. In this process, we also derive a sufficient condition for inadmissibility of an arbitrary location and permutation equivariant estimator and provide dominating estimators in cases, where this sufficient condition is satisfied. The mean squared error and the bias performances of various competing estimators are compared via a simulation study. A real data example is also provided for illustration purpose.  \vspace{2mm}\\
\noindent {\it AMS 2010 SUBJECT CLASSIFICATIONS:} 62F07 · 62F10 · 62C20\\

 \noindent {\it Keywords and Phrases:}~Two stage drop-the-losers design; adaptive trial; mean squared error; UMVCUE; admissible; minimax;  inadmissible estimator.
\newcommand\blfootnote[1]{%
	\begingroup
	\renewcommand\thefootnote{}\footnote{#1}%
	\addtocounter{footnote}{-1}%
	\endgroup
}\blfootnote{%Corresponding author(s):
}
\section{Introduction}
In clinical and drug development studies, while evaluating the effects of multiple treatments (or drugs), a healthcare practitioner is often interested in choosing the most efficacious treatment out of $k ~(\geq 2)$ active treatments. The effectiveness of a treatment is evaluated on the basis of the characteristic (or a parametric function) associated with it. A treatment's mean effect is usually a primary characteristic of interest. 
Statistical significance tests (such as the test of homogeneity), which investigate the hypothesis of equal treatment effects, are the traditional statistical tool to such a problem. We conclude that the treatment effects aren't equal if this hypothesis is rejected, but in that case we don't know which treatment is the best (more effective). An obvious choice will be to select the treatment that corresponds to the largest mean. Such a strategy is known to be optimal under quite general scenarios (see, for example \cite{bahadur1952impartial}, \cite{eaton1967some} and \cite{misra1994non}). After choosing one of the treatments/drugs as the most effective one, an interesting subject is that of estimating the mean effect of the chosen treatment  where some additional data is available at the time of estimation.
In the biomedical field, this need has resulted in the emergence of a statistical framework, called as the  adaptive clinical trials.
 An adaptive design typically consists of several phases. Data analyses are performed at each stage, and adaptations are made based on updated information to achieve maximum  likelihood of success. For a detailed informational review on developments in inference procedures for a two-stage adaptive trial setting, one may refer to \cite{bauer2016twenty} and \cite{robertson2021point}. \par
A drop-the-loser design (DLD) is a multistage adaptive design, under which interim analyses are performed at each stage, and losers (e.g., inferior treatment groups) are dropped based on predetermined criteria. 
For an introduction of two-stage drop-the-losers methodology see \cite{thall1988two} and \cite{thall1989two}. For further developments one may refer to  \cite{bauer1999combining}, \cite{sampson2005drop}, \cite{wu2010interval}  and \cite{lu2013estimating}. \par
\cite{cohen1989two} first proposed an unbiased estimate for the mean of the best performing treatment in a purely statistical set up using a two stage adaptive design framework. They have derived an uniformly minimum variance conditionally unbiased estimator (UMVCUE) for the best performing candidate in the Gaussian setting with a common known and unknown variance. However, the proposed estimator is only meaningful for the case when the stage 1 sample sizes are all equal. For the case of known variance,  \cite{bowden2008unbiased} extended the method of \cite{cohen1989two} to unequal stage one and stage two sample sizes. They proposed a two-stage unbiased estimate for the best performing treatment at the interim. Their method is valid when the quantity of interest is the best, second best, or the $j^{th}$ best treatment among $k$ treatments. In the Gaussian setting, for the case of common unknown variance, \cite{robertson2019conditionally} provided an UMVCUE which works for multiple selected candidates, as well as unequal stage one and stage two sample sizes. \cite{sill2007extension} extended the methodology of \cite{cohen1989two} to the bivariate normal case where the covariance matrix is assumed to be known.  \par 

In the literature, most of the work in two stage adaptive design setting is either in hypothesis testing framework or on investigating estimators for the selected treatment mean when bias is the main criterion. In a two-stage context, however, proposing efficient estimators when mean squared error is the key criterion, has received relatively little attention in the literature. \cite{lu2013estimating} considered the problem of estimating the mean of the selected normal population in two-stage adaptive design and have shown that for $k \geq 3$ the naive estimator, which is the weighted average of the first and second stage sample means (with weights being proportional to the corresponding sample sizes)  is not minimax under squared error and LINEX loss functions. In this article, building upon the work of \cite{lu2013estimating}, we consider the problem of estimating the mean of the selected treatment in a two stage drop-the-losers setting and obtain various decision theoretic results under the criterion of mean squared error. \par
The paper is organized as follows: In Section 2, we introduce some notations and preliminaries that will be used throughout the paper. In Section 3, we prove that the naive estimator, which is the weighted average of the first and second stage sample means of adjudged effective treatment, is  minimax and admissible for estimating the selected treatment mean. In Section 4, we derive a sufficient condition for inadmissibility of an arbitrary location and permutation equivariant estimator. Some additional estimators, having nice performances in terms of mean squared error, are also derived in Section 4 of the paper. In section 5 , we report a simulation study to numerically assess performances of the various competing estimators under the criterion of mean squared error and Bias. In Section 5, we present a data analysis to illustrate implementation of proposed methodology.

%%%%%%%%%%%%%%%%%%%%%%%%%%%%%%%%%%%%%

\section{Preliminaries}
We consider a Drop-the-Losers design which is divided into two stages, with a single data-driven selection made in the middle. In the first stage of the design, referred to as Stage A, the investigators would independently administer two treatments (say, $T_1$ and $T_2$)  to $n_1$ subjects each. Suppose that the effects of two treatments $T_1$ and $T_2$ are characterized by two Gaussian populations $N\left(\mu_i, \sigma^2 \right),~ i=1,2$, having different unknown means ($\mu_i \in (-\infty,\infty),~i=1,2$) and a common known variance $\sigma^2$($> 0$). The treatment corresponding to $\max\{\mu_{1},\mu_{2}\}$ is considered to be the promising treatment or better treatment. Let $\overline{X}_i,~ i=1,2,$ be  the sample means (treatment effect estimates) corresponding to the two populations. Consider the natural selection rule that selects the treatment yielding the larger sample mean as the better treatment. Let $S \in \{1,2\}$ denote the index of the selected treatment $T_S$ (i.e. $S=1$, if $\overline{X}_1 > \overline{X}_2$,  $S=2$, if $\overline{X}_2 \geq \overline{X}_1$). $T_S$ is then taken forward in the second stage, referred to as Stage B of the two stage trial, for further confirmatory analysis. We draw another sample of size $n_2$ independently for the treatment $T_S$ selected in Stage A. Let $\overline{Y}$ be the stage B sample mean from the selected treatment $T_S$. The goal is to estimate $\mu_{S}$, the treatment effect of the selected treatment. Clearly, $\overline{X}_i \sim N\left(\mu_i, \frac{\sigma^2}{n_1} \right),~i=1,2,$ are independently distributed  and, conditioned on $S$, $\overline{Y} \sim N\left(\mu_S, \frac{\sigma^2}{n_2} \right)$. \par 
 We will be using the following notations throughout the paper:\\
\begin{itemize}
	\item[ $\bullet$] $\mathbb{R}$: the real line $\left(-\infty,\infty\right)$;
	%\item[ $\bullet$] $\mathbb{R}_{-}$: the negative half of the real line $\left(-\infty, 0\right)$;
	\item[ $\bullet$] $\mathbb{R}^k$: the $k$ dimensional Euclidean space, $k \in \{2,3,\ldots\}$;
	\item[ $\bullet$] $N\left(\lambda, \tau^2 \right)$: normal distribution with mean $\lambda \in \mathbb{R}$ and standard deviation $\tau \in \left(0,\infty\right) $;
	%\item[ $\bullet$] $\chi^2_{\nu}$: central chi-square distribution with $\nu~ (\in \{1,2,\ldots\})$ degrees of freedom;
	\item[ $\bullet$] $N_2\left(\underline{\mu}, \Sigma \right)$: the bivariate normal distribution with mean vector $\underline{\mu} \in \mathbb{R}^2$ and positive definite covariance matrix $\Sigma$;
	\item[ $\bullet$] $\phi(\cdot) $: probability density function (p.d.f.) of $N(0,1)$;
	\item[ $\bullet$] $\Phi(\cdot)$: cumulative distribution function (c.d.f.) of $N(0,1)$;
	\item[ $\bullet$] For real numbers  $a$ and $b$
	\begin{equation*}
	\label{S1.E1}
	I(a\geq b)=\begin{cases}
	1, & \text{if $a \geq b$ }\\
	0, & \text{if $a < b$}.
	\end{cases}
	\end{equation*}
\end{itemize}
 \par

In addition to the notations introduced above, we make use of the following notations throughout the paper:\\
\noindent $\underline{X} = (\overline{X}_1,\overline{X}_2, \overline{Y})$; $\underline{\mu} = (\mu_1,\mu_2)$; $\Theta = \mathbb{R}^2$;  $\overline{X}_{S} = \max (\overline{X}_1 ,\overline{X}_2)$ (maximum of $\overline{X}_1$ and $\overline{X}_2$); $\overline{X}_{3-S} = \min (\overline{X}_1 ,\overline{X}_2)$ (minimum of $\overline{X}_1$ and $\overline{X}_2$);  $D_1 = \overline{X}_{3-S} - \overline{X}_{S}$;  $D_2 =  \overline{Y} - \overline{X}_{S}$; $\theta_1 = \min(\mu_1, \mu_2)$;  $\theta_2 = \max(\mu_1, \mu_2)$;  $\theta = \theta_2 - \theta_1$.  Also, for any $\underline{\mu} \in \Theta$, $\mathbb{P}_{\underline{\mu}}(\cdot)$ will denote the probability  measure induced by $\underline{X} = (\overline{X}_1,\overline{X}_2, \overline{Y})$, when $\underline{\mu} \in \Theta$ is the true parameter value, and $\mathbb{E}_{\underline{\mu}}(\cdot)$ will denote the expectation operator under the probability measure $\mathbb{P}_{\mu}(\cdot)$, $\underline{\mu} \in \Theta$. Note  that $\theta \geq 0$ and $\mathbb{P}_{\underline{\mu}}(D_1\leq0)=1, \forall~ \underline{\mu} \in  \Theta$.\par
We consider estimation of 
\begin{equation}
\label{S1.E1}
\mu_S=\begin{cases}
\mu_1, & \text{if $\overline{X}_1  \geq \overline{X}_2$ }\\
\mu_2, & \text{if $\overline{X}_1 < \overline{X}_2$}
\end{cases},
\end{equation}
under the squared error loss function
\begin{equation}
\label{S2.E2}
L_{\underline{T}}(\underline{\theta},a) = \left( a-\mu_S\right)^2  ,~~ \underline{\theta} \in \Theta, ~ a \in \mathcal{A} = \mathbb{R}.
\end{equation}
Under the squared error loss function $(2.2)$, the risk function (also referred to as the  mean squared error) of an estimator $\delta(\cdot)$ is defined by
$$ R(\underline{\mu}, \delta)=\mathbb{E}_{\underline{\mu}}((\delta(\underline{X})-\mu_S)^2),~ \underline{\mu} \in \Theta.$$ 
An estimator $\delta(\underline{X})$ is said to be conditionally unbiased if  for every $\underline{\mu} \in \Theta$,
$$ \mathbb{E}_{\underline{\mu}}((\delta(\underline{X})|S=s))=\mu_{s},~ \forall \underline{\mu} \in \Theta.$$ \par 
A natural estimator for estimating $\mu_S$, the underlying true response of the selected treatment, is the weighted average of sample means at two stages, i.e. 
\begin{equation}
\delta_M(\underline{X})=\frac{n_1 \overline{X}_S+n_2\overline{Y}}{n_1+n_2}.
\end{equation}
Clearly, $\delta_M(\underline{X})$ is the maximum likelihood estimator (MLE) of $\mu_{S}$. It is worth mentioning here that the statistic $\underline{T}=\left(\frac{n_1\overline{X}_S+n_2\overline{Y}}{n_1+n_2},\overline{X}_{3-S},S\right)=\left(T_1,T_2,S\right)$ (say) is minimal sufficient but not complete. However, given $S=s$, the statistic $(T_1,T_2)$ is a complete-sufficient statistic (cf. \cite{bowden2008unbiased}). Conditioned on $S=s$, \cite{bowden2008unbiased} derived the uniformly minimum variance conditionally unbiased estimator (UMVCUE) of $\mu_{S}$ as
\begin{equation}
\delta_{BG}(\underline{X})=\frac{n_1\overline{X}_S+n_2 \overline{Y}}{n_1+n_2}-\sigma\sqrt{\frac{n_1}{n_2(n_1+n_2)}}\frac{\phi\left(\sqrt{\frac{n_1}{n_2(n_1+n_2)}}\frac{\left(n_2D_2-(n_1+n_2)D_1\right)}{\sigma}\right)}{\Phi\left(\sqrt{\frac{n_1}{n_2(n_1+n_2)}}\frac{\left(n_2D_2-(n_1+n_2)D_1\right)}{\sigma}\right)},
\end{equation} 
where, $D_1=\overline{X}_{3-S}-\overline{X}_S$ and $D_2=\overline{Y}-\overline{X}_{S}$. \par 
In this paper, we aim to study properties of these natural estimators $\delta_{M}$ and $\delta_{BG}$. Specifically we wish to explore if these estimators can be improved. In this direction, in Section 3 of the paper, we prove that the natural estimator $\delta_M$ is minimax and admissible. In Section 4, we prove that the UMVCUE $\delta_{BG}$ is inadmissible for estimating $\mu_{S}$ and we obtain an estimator improving upon it. In the process we derive a sufficient condition for inadmissibility of an arbitrary location and permutation equivariant estimator and obtain dominating estimators in situations where this sufficient condition is satisfied. We then apply this general result to various single stage and two stage estimators and find improved estimators. In section 5, we make numerical comparisons of various competing estimators. A real-life data analysis is presented in Section 6.

The following two lemmas will be useful in proving main results of the paper.

\begin{lemma} \label{S3,L1}
	For any $a \in \mathbb{R}$ and $b \in \mathbb{R}$,
	\begin{itemize}
		
		\item [(i)] $\displaystyle\int_{-\infty}^{\infty} \Phi \left( ax+b\right)\phi(x)~dx= \Phi\left( \frac{b}{\sqrt{1+a^2}}\right)$;
		\item [(ii)] $\displaystyle\int_{-\infty}^{\infty} x\phi \left( ax+b\right)\phi(x)~dx= \frac{-ab}{(1+a^2)^{3/2}}\phi\left( \frac{b}{\sqrt{1+a^2}}\right)$;
		%\item [(iii)] $\displaystyle\int_{-\infty}^{\infty} \phi \left( ax+b\right)\phi(x)~dx= \frac{1}{\sqrt{1+a^2}}\phi\left( \frac{b}{\sqrt{1+a^2}}\right)$;
	%	\item [(iv)] $\displaystyle\int_{-\infty}^{\infty} x\Phi \left( ax+b\right)\phi(x)~dx= \frac{a}{\sqrt{1+a^2}}\phi\left( \frac{b}{\sqrt{1+a^2}}\right)$;
		\item [(iii)] $\displaystyle\int_{-\infty}^{\infty} x^2\Phi \left( ax+b\right)\phi(x)~dx=\Phi\left( \frac{b}{\sqrt{1+a^2}}\right)- \frac{a^2 b}{(1+a^2)^{3/2}}\phi\left( \frac{b}{\sqrt{1+a^2}}\right)$.	
	\end{itemize}	
	\begin{proof}
		For  proofs of identities $(i)$ and $(ii)$, refer to Lemma $3.2$ of \cite{masihuddin2021equivariant}. \\
	%	$(iv)$ Using the relation, $x\phi(x)=-\phi'(x)$ and integrating by parts, the L.H.S. of $(iv)$, we get
	%	\begin{align*}
	%	\displaystyle\int_{-\infty}^{\infty} x\Phi \left( ax+b\right)\phi(x)~dx&=a \displaystyle\int_{-\infty}^{\infty} \phi \left( ax+b\right)\phi(x)~dx\\
	%	&=\frac{a}{\sqrt{1+a^2}}\phi\left( \frac{b}{\sqrt{1+a^2}}\right).
	%	\end{align*}
		$(iii)$ Using the relation, $x\phi(x)=-\phi'(x)$ and an application of the identities given in $(i)$ and $(ii)$ yields
		\begin{align*}
		\displaystyle\int_{-\infty}^{\infty} x^2\Phi \left( ax+b\right)\phi(x)~dx&= \displaystyle\int_{-\infty}^{\infty} \left\{\Phi(ax+b)+ax\phi(ax+b)\right\}\phi(x)~dx\\
		&=\Phi\left( \frac{b}{\sqrt{1+a^2}}\right)- \frac{a^2 b}{(1+a^2)^{3/2}}\phi\left( \frac{b}{\sqrt{1+a^2}}\right).
		\end{align*}
	\end{proof}
\end{lemma}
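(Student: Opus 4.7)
Parts (i) and (ii) are already recorded in \cite{masihuddin2021equivariant}, so I would either cite that source directly or supply quick self-contained derivations. For (i), the cleanest route is probabilistic: if $Z_{1},Z_{2}$ are independent standard normals, then
\[
\int_{-\infty}^{\infty}\Phi(ax+b)\phi(x)\,dx = \mathbb{P}(Z_{1}\leq aZ_{2}+b) = \mathbb{P}(Z_{1}-aZ_{2}\leq b),
\]
and since $Z_{1}-aZ_{2}\sim N(0,1+a^{2})$ this equals $\Phi(b/\sqrt{1+a^{2}})$. For (ii), I would multiply the two Gaussian densities and complete the square in $x$: the product $\phi(ax+b)\phi(x)$ factors as a constant $\tfrac{1}{\sqrt{2\pi}}\phi(b/\sqrt{1+a^{2}})$ times a $N\!\left(-\tfrac{ab}{1+a^{2}},\tfrac{1}{1+a^{2}}\right)$ density, so integrating $x$ against it returns the mean of that density multiplied by the constant, which after simplification gives $-ab\,(1+a^{2})^{-3/2}\phi(b/\sqrt{1+a^{2}})$.

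For (iii), I would follow the hint $x\phi(x)=-\phi'(x)$ and integrate by parts. Rewriting
\[
\int_{-\infty}^{\infty}x^{2}\,\Phi(ax+b)\,\phi(x)\,dx \;=\; -\int_{-\infty}^{\infty}x\,\Phi(ax+b)\,\phi'(x)\,dx,
\]
integration by parts produces the boundary term $-\bigl[x\,\Phi(ax+b)\,\phi(x)\bigr]_{-\infty}^{\infty}$ together with the bulk term
\[
\int_{-\infty}^{\infty}\bigl\{\Phi(ax+b)+a\,x\,\phi(ax+b)\bigr\}\phi(x)\,dx.
\]
The boundary term vanishes because $\Phi$ is bounded and $x\phi(x)\to 0$ as $|x|\to\infty$. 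The first summand in the bulk term is handled by (i) and the second by (ii), and combining the two contributions with the extra factor $a$ that arises from differentiating $\Phi(ax+b)$ yields $\Phi(b/\sqrt{1+a^{2}})-a^{2}b\,(1+a^{2})^{-3/2}\phi(b/\sqrt{1+a^{2}})$, as claimed.

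There is no substantive obstacle here; the work is essentially bookkeeping. The only points requiring care are the vanishing of the boundary term in (iii) and correct tracking of the algebraic signs and of the powers of $(1+a^{2})$ when assembling the contributions from (i) and (ii). Since parts (i) and (ii) may be invoked as already established, the proof of (iii) is a short computation.
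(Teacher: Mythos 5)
Your argument for (iii) is exactly the paper's: rewrite $x^{2}\Phi(ax+b)\phi(x)=-x\,\Phi(ax+b)\,\phi'(x)$, integrate by parts (the boundary term vanishing since $\Phi$ is bounded and $x\phi(x)\to 0$), and apply (i) and (ii) to the bulk term $\int\{\Phi(ax+b)+ax\phi(ax+b)\}\phi(x)\,dx$; the paper, like you, simply cites the earlier reference for (i) and (ii). One cosmetic slip in your sketch of (ii): the constant in the Gaussian-product factorization should be $\tfrac{1}{\sqrt{1+a^{2}}}\phi\bigl(b/\sqrt{1+a^{2}}\bigr)$ rather than $\tfrac{1}{\sqrt{2\pi}}\phi\bigl(b/\sqrt{1+a^{2}}\bigr)$ (it must equal the total mass $\int\phi(ax+b)\phi(x)\,dx$), though your final expression is correct.
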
 
\begin{lemma} \label{S3,L2}
	Let $U=\frac{n_1\overline{X}_S+n_2\overline{Y}}{n_1+n_2}-\mu_S$,
	$  \sigma^2_{*}= \frac{\sigma^2}{n_1+n_2}$ and $\rho=\frac{n_2}{n_1+n_2}$. Then,
	\begin{itemize}
		\item[(i)]  the p.d.f. of $U$ is given by, $$f_U(u)=\frac{1}{\sigma_{*}}\left[\Phi\left(\frac{\sqrt{n_1}(u+\theta)}{\sigma\sqrt{1+\rho}}\right)+\Phi\left(\frac{\sqrt{n_1}(u-\theta)}{\sigma\sqrt{1+\rho}}\right)\right]\phi\left(\frac{u}{\sigma_{*}}\right),~ -\infty <u < \infty;$$
	%	\item[(ii)] $\mathbb{E}_{\mu} \left(U\right)=\frac{\sqrt{2n_1}\sigma_{*}}{\sqrt{(n_1+n_2)}} \phi\left(\frac{\sqrt{n_1}\theta}{\sqrt{2}\sigma}\right);$
		\item[(ii)] $\mathbb{E}_{\mu} \left(U^2\right)=\sigma^2_{*}= \frac{\sigma^2}{n_1+n_2}.$
	\end{itemize}
\end{lemma}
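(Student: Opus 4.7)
The plan is to establish (i) by conditioning on the selection index $S$, and then to deduce (ii) from the density in (i) via Lemma \ref{S3,L1}(iii).

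For (i), I would enlarge the probability space by introducing, for $i=1,2$, a would-be stage~B observation $\overline{Y}_i\sim N(\mu_i,\sigma^2/n_2)$, with $\overline{Y}_1,\overline{Y}_2,\overline{X}_1,\overline{X}_2$ mutually independent and $\overline{Y}=\overline{Y}_S$. Setting $Z_i=\overline{X}_i-\mu_i$, $W_i=\overline{Y}_i-\mu_i$, and $U_i=(n_1 Z_i+n_2 W_i)/(n_1+n_2)\sim N(0,\sigma_*^2)$, the event $\{U\le u\}$ splits as
\[
\mathbb{P}_{\underline{\mu}}(U\le u)=\mathbb{P}_{\underline{\mu}}(U_1\le u,\ Z_1-Z_2\ge \mu_2-\mu_1)+\mathbb{P}_{\underline{\mu}}(U_2\le u,\ Z_2-Z_1>\mu_1-\mu_2).
\]
Without loss of generality one may take $\mu_1\ge\mu_2$ so that $\theta=\mu_1-\mu_2$; the opposite case is symmetric. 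The crux is the conditional law of $Z_i-Z_{3-i}$ given $U_i=u'$: since $(U_i,Z_i)$ is jointly normal with $\mathrm{Var}(U_i)=\mathrm{Cov}(U_i,Z_i)=\sigma_*^2$, one obtains $Z_i\mid U_i=u'\sim N(u',\sigma^2\rho/n_1)$, while $Z_{3-i}$ is independent of $U_i$ and still $N(0,\sigma^2/n_1)$. Consequently $Z_i-Z_{3-i}\mid U_i=u'\sim N(u',\sigma^2(1+\rho)/n_1)$, which yields
\[
\mathbb{P}_{\underline{\mu}}(Z_1-Z_2\ge -\theta\mid U_1=u')=\Phi\!\left(\frac{\sqrt{n_1}(u'+\theta)}{\sigma\sqrt{1+\rho}}\right),
\]
and an analogous expression with $u'-\theta$ for the $S=2$ term. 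Integrating each against $\sigma_*^{-1}\phi(u'/\sigma_*)$ over $(-\infty,u)$, summing, and differentiating in $u$ produces the asserted density.

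For (ii), I would substitute $x=u/\sigma_*$ in $\int_{-\infty}^{\infty} u^2 f_U(u)\,du$ and put $a=\sigma_*\sqrt{n_1}/(\sigma\sqrt{1+\rho})$, $b=\theta\sqrt{n_1}/(\sigma\sqrt{1+\rho})$, reducing the moment to
\[
\mathbb{E}_{\underline{\mu}}(U^2)=\sigma_*^2\int_{-\infty}^{\infty} x^2\bigl[\Phi(ax+b)+\Phi(ax-b)\bigr]\phi(x)\,dx.
\]
Applying Lemma \ref{S3,L1}(iii) to each summand and using $\phi(-t)=\phi(t)$, the two correction terms $\mp\, a^2 b(1+a^2)^{-3/2}\phi(b/\sqrt{1+a^2})$ cancel, and $\Phi(b/\sqrt{1+a^2})+\Phi(-b/\sqrt{1+a^2})=1$ collapses the rest to $1$, giving $\mathbb{E}_{\underline{\mu}}(U^2)=\sigma_*^2$.

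The main obstacle is the bookkeeping in (i): one must track the joint law of $(U_i,Z_i,Z_{3-i})$ carefully, in particular using that $Z_{3-i}$ is independent of the stage-one-plus-stage-two combination $U_i$, in order to recognize the conditional distribution of the selection statistic $Z_i-Z_{3-i}$ given $U_i$ as a translated normal with variance $\sigma^2(1+\rho)/n_1$. Once (i) is in hand, (ii) is a routine application of Lemma \ref{S3,L1}(iii) together with the elementary identity $\Phi(\cdot)+\Phi(-\cdot)=1$.
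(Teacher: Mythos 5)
Your proposal is correct and follows essentially the same route as the paper: both introduce auxiliary second-stage means $\overline{Y}_1,\overline{Y}_2$, split $\{U\le u\}$ over the selection event, and exploit the joint normality of the selection statistic with the combined estimator (your conditioning of $Z_i-Z_{3-i}$ on $U_i$ is, up to scaling, the paper's conditioning of $B_1$ on $B_2$), arriving at the same conditional variance $\sigma^2(1+\rho)/n_1$. Part (ii) is likewise handled identically via Lemma \ref{S3,L1}(iii), with the cancellation of the $\phi$ terms and $\Phi(c)+\Phi(-c)=1$.
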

\begin{proof} See the Appendix.
\end{proof}
In case of single stage sampling alone, it is well known that the naive estimator $\delta_{0}(\underline{X})=\overline{X}_S$ has good risk properties. It is minimax and admissible under the squared error loss function (see \cite{sackrowitz1986evaluating} and \cite{hwang1993empirical}).  Therefore, it is natural to investigate the minimaxity and admissibility of the two stage natural estimator $\delta_M(\underline{X})$ defined in (2.3). \par 
%In search of estimators that perform well, in Section 3, we study performance of MLE $\delta_{M}(\underline{X})$ under the criterion of mean squared error (MSE). We provide a result on minimaxity of the naive estimator $\delta_{M}$. Further, we prove that $\delta_{M}$ is admissible among all the estimators in class of location and permutation equivariant estimators under the MSE criterion. %We also envisage to find estimators dominating on UMVCUEs proposed by  \cite{cohen1989two} and \cite{bowden2008unbiased}. \\\par
\par In the following section we prove that the naive estimator $\delta_M(\underline{X})=\frac{n_1\overline{X}_S+n_2\overline{Y}}{n_1+n_2}$ is minimax and admissible for estimating the selected treatment mean $\mu_{S}$ under the squared error loss function (2.2).
\section{Minimaxity and admissibility of the MLE $\delta_M(\underline{X})$}
In the Bayesian set-up, suppose that the unknown parameter $\underline{\mu}=(\mu_1,\mu_2) \in \Theta$ is considered to be a realization of a random vector $\underline{R}=(R_1,R_2)$, having a specified probability distribution called prior distribution. Consider a sequence of prior densities $\left\{\Pi_m\right\}_{m\geq 1}$ for  $\underline{R}=(R_1,R_2)$ as follows:
\begin{equation}
\Pi_m(r_1,r_2)=\frac{1}{2\pi m^2}e^{-\frac{\sum\limits_{i=1}^{2} r^2_i}{2m^2}}, ~-\infty< r_i < \infty,~ i=1,2, ~m=1,2, \ldots
\end{equation}

Recall that,  $\overline{X}_i ~(i=1,2)$ is the sample mean of the first stage sample from the $i^{th}$ population and $\overline{Y}$ is the second stage sample mean of the sample drawn from the population selected at the first stage. \par
It is easy to see that, the posterior distribution of $(R_1,R_2)$ given $(\overline{X}_1, \overline{X}_2,\overline{Y})=(x_1,x_2,y)$ with respect to the prior distribution $\Pi_{m}~(m=1,2,\ldots)$ is such that, the random variables $R_1$ and $R_2$ are independently distributed as $N\left(\lambda_{1,m}, \tau^2_{1,m}\right)$ and $N\left(\lambda_{2,m}, \tau^2_{2,m}\right)$ respectively, where 
\begin{equation*}
\left(\lambda_{1,m}, \lambda_{2,m}, \tau^2_{1,m}, \tau^2_{2,m}\right)=\begin{cases}
\left(\frac{n_1x_1+n_2y}{n_1+n_2+\frac{\sigma^2}{m^2}},\frac{n_1x_2}{n_1+\frac{\sigma^2}{m^2}},\frac{1}{\frac{n_1+n_2}{\sigma^2}+\frac{1}{m^2}},\frac{1}{\frac{n_1}{\sigma^2}+\frac{1}{m^2}}\right), & \text{if $x_1  \geq x_2$ } \vspace{2mm}\\ 
\left(\frac{n_1x_1}{n_1+\frac{\sigma^2}{m^2}},\frac{n_1x_2+n_2y}{n_1+n_2+\frac{\sigma^2}{m^2}}, \frac{1}{\frac{n_1}{\sigma^2}+\frac{1}{m^2}},\frac{1}{\frac{n_1+n_2}{\sigma^2}+\frac{1}{m^2}}\right), & \text{if $x_1 < x_2$}
\end{cases}.
\end{equation*}
Therefore, under the squared error loss function \eqref{S2.E2}, the Bayes estimator of the selected treatment mean $\mu_S$ is
\begin{align}
\delta_{\Pi_m}\left(\underline{X}\right)%&=\mathbb{E}^{\underline{R}|\underline{T}}\left(R_s\right) \nonumber\\
&=\begin{cases*}
\mathbb{E}\left(R_1|\underline{X}\right), & \text{if $\overline{X}_1 \geq \overline{X}_2$ }\\
\mathbb{E}\left(R_2|\underline{X}\right), & \text{if $\overline{X}_1 <  \overline{X}_2$}
\end{cases*} \nonumber\\
&=\begin{cases*}
\frac{\left(n_1 \overline{X}_1 + n_2\overline{Y}\right)}{(n_1+n_2)+\frac{\sigma^2}{m^2}}, & \text{if $\overline{X}_1 \geq \overline{X}_2$ }\\
\frac{\left(n_1 \overline{X}_2 + n_2\overline{Y}\right)}{(n_1+n_2)+\frac{\sigma^2}{m^2}}, & \text{if $\overline{X}_1 <  \overline{X}_2$}
\end{cases*} \nonumber\\
&=\frac{\left(n_1\overline{X}_S+n_2\overline{Y}\right)}{(n_1+n_2)+\frac{\sigma^2}{m^2}}.
\end{align}
The posterior risk of the Bayes rule $\delta_{\Pi_m}\left(\underline{X}\right)$ is obtained as
\begin{equation*}
r_{\delta_{\Pi_m}}\left(\underline{X}\right)=\frac{1}{\frac{n_1+n_2}{\sigma^2}+\frac{1}{m^2}},
\end{equation*}
which does not depend on $\underline{X}=(\overline{X}_1,\overline{X}_2,\overline{Y})$. Hence the Bayes risk of the estimator $\delta_{\Pi_m}$ is 
\begin{equation}
r^*_{\delta_{\Pi_m}}\left(\Pi_m\right)=\frac{1}{\frac{n_1+n_2}{\sigma^2}+\frac{1}{m^2}},~ m=1,2,\ldots
\end{equation}
Using Lemma $2.2 ~(ii)$, we have the risk of the estimator $\delta_M(\underline{X})=\frac{n_1\overline{X}_S+n_2\overline{Y}}{n_1+n_2}$, as
\begin{align}
R(\underline{\mu},\delta_{M})&=\mathbb{E}_{\underline{\mu}}\left(\frac{n_1\overline{X}_S+n_2\overline{Y}}{n_1+n_2}-\mu_S\right)^2 \nonumber \\
&=\frac{\sigma^2}{n_1+n_2}.
\end{align}
Therefore, the Bayes risk of the natural estimator $\delta_{M}(\underline{X})$, under the prior $\Pi_m$, is
\begin{align}
r^*_{\delta_{M}}\left(\Pi_m\right)&=\frac{\sigma^2}{n_1+n_2},~ m=1,2,\ldots
\end{align}
Now, we provide the following theorem which proves the minimaxity of the natural estimator $\delta_{M}$.
\begin{theorem}
	The MLE $\delta_M(\underline{X})=\frac{n_1\overline{X}_S+n_2\overline{Y}}{n_1+n_2}$ is minimax for estimating the selected treatment mean $\mu_{S}$ under the squared error loss function \eqref{S2.E2}.
\end{theorem}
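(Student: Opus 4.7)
The plan is to use the classical ``limit of Bayes risks'' criterion for minimaxity: if an estimator has constant risk $c$ and there is a sequence of priors whose Bayes risks converge up to $c$, then the estimator is minimax. The excerpt has essentially laid out every ingredient needed, so the proof reduces to verifying these two conditions and invoking the criterion.

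First I would observe from (3.4) that the risk of $\delta_M$ is constant in $\underline{\mu}$, with value $c=\sigma^2/(n_1+n_2)$. This is the crucial structural fact, and it follows directly from Lemma~2.2(ii), which the authors have already isolated. Next I would examine the Bayes risks of the Bayes rules $\delta_{\Pi_m}$ obtained in (3.3):
\begin{equation*}
r^*_{\delta_{\Pi_m}}(\Pi_m)=\frac{1}{\tfrac{n_1+n_2}{\sigma^2}+\tfrac{1}{m^2}}\;\xrightarrow[m\to\infty]{}\;\frac{\sigma^2}{n_1+n_2}=c.
\end{equation*}

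The concluding step is the standard contradiction argument. Suppose an estimator $\delta'$ beats $\delta_M$ in the minimax sense, i.e.\ $\sup_{\underline{\mu}\in\Theta} R(\underline{\mu},\delta')<c$. Then, taking expectation over any prior $\Pi_m$,
\begin{equation*}
r^*_{\delta'}(\Pi_m)\;\le\;\sup_{\underline{\mu}\in\Theta} R(\underline{\mu},\delta')\;<\;c.
\end{equation*}
But $\delta_{\Pi_m}$ is Bayes for $\Pi_m$, so $r^*_{\delta_{\Pi_m}}(\Pi_m)\le r^*_{\delta'}(\Pi_m)$, and letting $m\to\infty$ forces $c\le\sup R(\underline{\mu},\delta')<c$, a contradiction. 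Hence no such $\delta'$ exists and $\delta_M$ is minimax.

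There is no real obstacle: the two genuinely nontrivial calculations, namely computing the posterior under $\Pi_m$ and evaluating $\mathbb{E}_{\underline{\mu}} U^2$, have already been done in (3.3) and Lemma~2.2(ii). The only care needed in the writeup is to state the limit-of-Bayes-risks minimax criterion cleanly and note that constancy of $R(\underline{\mu},\delta_M)$ makes the supremum equal to the Bayes risk under every prior, which is what lets the asymptotic comparison with $r^*_{\delta_{\Pi_m}}(\Pi_m)$ close the argument.
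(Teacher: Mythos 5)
Your proposal is correct and follows essentially the same route as the paper: both rest on the constant risk $R(\underline{\mu},\delta_M)=\sigma^2/(n_1+n_2)$ from Lemma~2.2(ii) together with the sequence of normal priors $\Pi_m$ whose Bayes risks $\bigl(\tfrac{n_1+n_2}{\sigma^2}+\tfrac{1}{m^2}\bigr)^{-1}$ increase to that constant. The only difference is presentational — you phrase the final step as a contradiction while the paper directly bounds $\sup_{\underline{\mu}}R(\underline{\mu},\delta)$ below by $\lim_m r^*_{\delta_{\Pi_m}}(\Pi_m)$ for an arbitrary competitor $\delta$ — and this does not change the substance of the argument.
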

\begin{proof}
	Consider $\delta$ to be any other estimator. Since, $\delta_{\Pi_m}$, given by $(3.2)$, is the Bayes estimator of $\mu_{S}$ with respect to the sequence of priors $\Pi_m$, we have,
	\begin{align*}
	\sup_{\underline{\mu} \in \Theta } R(\underline{\mu}, \delta) &\geq \int_{\Theta} R(\underline{\mu},\delta) \Pi_{m}(\mu_{1},\mu_{2})d\underline{\mu}\\
	&\geq \int_{\Theta} R(\underline{\mu},\delta_{\Pi_{m}}) \Pi_{m}(\mu_{1},\mu_{2})d\underline{\mu}\\
	&=r^*_{\delta_{\Pi_{m}}}(\Pi_{m})=\frac{1}{\frac{(n_1+n_2)}{\sigma^2}+\frac{1}{m^2}}, ~~~m=1,2,\ldots ~~~(\text{using}~ (3.3))\\
	\Rightarrow \sup_{\underline{\mu} \in \Theta } R(\underline{\mu}, \delta) &\geq \lim_{m\to\infty}r^*_{\delta_{\Pi_{m}}}(\Pi_{m})=\frac{\sigma^2}{n_1+n_2}=\sup_{\underline{\mu} \in \Theta } R(\underline{\mu}, \delta_{M}), ~~~(\text{using} ~(3.4))
	\end{align*}
	which implies that $\delta_{M}(\underline{X})=\frac{n_1\overline{X}_S+n_2\overline{Y}}{n_1+n_2}$ is minimax for estimating $\mu_{S}$.	
\end{proof}
Now we apply the principle of invariance.
The  estimation problem at hand is invariant under the location group of transformations $\mathcal{G}=\{g_{b}:  b \in {\mathbb{R}}\}$, where $g_{b}(x_{1}, x_{2}, y )=(x_{1}+b, {x}_{2}+b, y+b), (x_{1}, x_{2}, y) \in \mathbb{R}^{3} , b \in {\mathbb{R}}$, and also under the group of permutations $\mathcal{G}_p=\{g_1,g_2\}$, where $g_{1}(x_{1}, x_{2}, y)=(x_1,x_2,y),~ g_{2}(x_{1}, x_{2}, y)=(x_2,x_1,y), (x_{1}, x_{2}, y) \in \mathbb{R}^{3}$.  For the goal of estimating $\mu_{S}$, it is easy to verify that any location and permutation equivariant estimator will be of the form
\begin{equation} \label{S2,E3}
\delta_{\psi}(\underline{X})=\frac{n_1\overline{X}_S+n_2\overline{Y}}{n_1+n_2} + \psi\left(D_1,D_2\right),
\end{equation}
for some function $\psi:(-\infty,0] \times \mathbb{R} \rightarrow \mathbb{R}$, where $D_1=\overline{X}_{3-S}-\overline{X}_S$ and $D_2=\overline{Y}-\overline{X}_S$.  
Let $\mathcal{D}_1$ denote the class of all location and permutation equivariant estimators of the type (3.6). Note that the MLE $\delta_M(\underline{X})$  and UMVCUE $\delta_{BG}$, defined by (2.3) and $(2.4)$  respectively, belong to the class $\mathcal{D}_1.$  

Next, we will show that the MLE $\delta_{M}$ is admissible in the class $\mathcal{D}_1$ of location and permutation equivariant estimators. Notice that, the risk function of any estimator $\delta \in \mathcal{D}_1$ depends on $\underline{\mu} \in \Theta$ through $\theta=\theta_2-\theta_1$. Therefore, we denote $R(\underline{\mu},\delta)$ by $R_{\delta}(\theta)$, $\theta \geq 0.$ Consequently, the Bayes risk of any estimator $\delta \in \mathcal{D}_1$ depends on the prior distribution of $R=(R_1,R_2)$ through the distribution of $(R_{[2]}-R_{[1]})$, where $R_{[1]}=\min\left\{R_1,R_2\right\}$, $R_{[2]}=\max\left\{R_1,R_2\right\}.$ \par 
The following theorem establishes the admissibility of the estimator $\delta_{M}$, within the class $\mathcal{D}_1$ of location and permutation equivariant estimators.
\begin{theorem}
For estimating $\mu_{S}$, under the squared error loss function \eqref{S2.E2}, the MLE  $\delta_{M}(\underline{X})=\frac{n_1\overline{X}_S+n_2\overline{Y}}{n_1+n_2}$ is admissible within the class $\mathcal{D}_1$.
\end{theorem}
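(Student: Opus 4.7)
The plan is to prove admissibility within $\mathcal{D}_1$ by Blyth's method, reusing the sequence of Gaussian priors $\{\Pi_m\}_{m \geq 1}$ already introduced in the proof of Theorem 3.1. Two structural facts drive the argument: (a) every $\delta_\psi = \delta_M + \psi(D_1,D_2) \in \mathcal{D}_1$ has a location- and permutation-invariant risk function, so $R(\underline{\mu}, \delta_\psi)$ depends on $\underline{\mu}$ only through $\theta = \theta_2 - \theta_1 \in [0,\infty)$, justifying the notation $R_{\delta_\psi}(\theta)$; and (b) by Lemma 2.2(ii), $R_{\delta_M}(\theta) = \sigma^2/(n_1+n_2)$ is constant in $\theta$, so the Bayes risk of $\delta_M$ under any prior on $\underline{\mu}$ equals $\sigma^2/(n_1+n_2)$.

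Suppose, for contradiction, that some $\delta^* \in \mathcal{D}_1$ dominates $\delta_M$: $R_{\delta^*}(\theta) \leq \sigma^2/(n_1+n_2)$ for every $\theta \geq 0$, with strict inequality at some $\theta_0 \geq 0$. A dominated-convergence argument, exploiting the smooth dependence of the joint Gaussian density of $(U, D_1, D_2)$ on $\theta$ together with the uniform bound $R_{\delta^*} \leq \sigma^2/(n_1+n_2)$, shows that $R_{\delta^*}(\cdot)$ is continuous on $[0,\infty)$. Hence there exist $\epsilon, h > 0$ such that $R_{\delta_M}(\theta) - R_{\delta^*}(\theta) \geq \epsilon$ for all $\theta$ in an interval $J \subset [0, \infty)$ of length $h$ containing $\theta_0$.

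Under $\Pi_m$ the coordinates $R_1, R_2$ are independent $N(0, m^2)$, so $R_1 - R_2 \sim N(0, 2m^2)$ and $\theta = |R_1 - R_2|$ admits the half-normal density $f_m(t) = (1/(m\sqrt{\pi})) \exp(-t^2/(4m^2))$ on $[0,\infty)$. Integrating the pointwise domination against $f_m$ yields, for all sufficiently large $m$,
\begin{equation*}
r^*_{\delta_M}(\Pi_m) - r^*_{\delta^*}(\Pi_m) \;\geq\; \epsilon \int_J f_m(t)\, dt \;\geq\; \frac{C_1}{m},
\end{equation*}
for some constant $C_1 > 0$. On the other hand, since $\delta_{\Pi_m}$ is Bayes under $\Pi_m$, formulas (3.3) and (3.5) give
\begin{equation*}
r^*_{\delta_M}(\Pi_m) - r^*_{\delta^*}(\Pi_m) \;\leq\; r^*_{\delta_M}(\Pi_m) - r^*_{\delta_{\Pi_m}}(\Pi_m) \;=\; \frac{\sigma^2}{n_1+n_2} - \frac{1}{\frac{n_1+n_2}{\sigma^2} + \frac{1}{m^2}} \;=\; O\!\left(\frac{1}{m^2}\right).
\end{equation*}
Chaining these inequalities forces $C_1/m \leq O(1/m^2)$, which is false for all large $m$; this contradiction shows no such $\delta^*$ exists, establishing admissibility of $\delta_M$ within $\mathcal{D}_1$. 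The main obstacle I foresee is the continuity of $R_{\delta^*}(\cdot)$, which is what lets the strict inequality at $\theta_0$ inflate to a set of positive $f_m$-mass; the rest is the routine Blyth rate comparison of $1/m$ versus $1/m^2$.
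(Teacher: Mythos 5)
Your proposal is correct and follows essentially the same route as the paper: a Blyth-type argument with the same sequence of Gaussian priors $\{\Pi_m\}$, using the constancy of $R_{\delta_M}(\theta)=\sigma^2/(n_1+n_2)$, continuity of the risk difference to isolate a set of $\theta$-values where the gap is bounded below, and the rate comparison between the prior mass of that set (order $1/m$) and the excess Bayes risk of $\delta_M$ over the Bayes rule $\delta_{\Pi_m}$ (order $1/m^2$). The paper works with the open set $D=\{\theta: R_{\delta_M}(\theta)-R_{\delta'}(\theta)>\epsilon/2\}$ rather than an interval $J$, but this is an immaterial difference.
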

\begin{proof}
	On contrary, let us assume that $\delta_{M}$ is inadmissible in $\mathcal{D}_1$, i.e., there exists an estimator $\delta'$ such that
	$$R_{\delta'}(\theta) \leq R_{\delta_{M}}(\theta),~ \forall~ \theta \geq 0$$
	and 
	$$R_{\delta'}(\theta) < R_{\delta_{M}}(\theta),~ \text{for some}~ \theta_0 \geq 0.$$
	
	Let $R_{\delta_{M}}(\theta_{0})-R_{\delta^{'}}(\theta_{0})=\epsilon$, so that $\epsilon>0$. Define $$D=\Big\{\theta \geq0 : R_{\delta_{M}}(\theta_{0})-R_{\delta^{'}}(\theta_{0})>\frac{\epsilon}{2}\Big\}.$$ Then $D \neq \phi~ (\theta_{0}\in D)$ and $D$ is open (since $R_{\delta_{M}}(\theta_{0})-R_{\delta^{'}}(\theta_{0})$ is a continuous function of $\theta \in \left[0,\infty \right))$. Let $r_{\delta^{'}}(\Pi_{m})$ denote the Bayes risk of $\delta'$ under the prior $\Pi_{m}$, $m=1,2\ldots$. Since $\delta_{\Pi_{m}}$, defined by $(3.2)$, is the Bayes estimator  with respect to prior $\Pi_{m}$, we have
	\begin{align}
	r_{\delta_{M}}(\Pi_{m})-r_{\delta_{\Pi_{m}}}(\Pi_{m}) &\geq r_{\delta_{M}}(\Pi_{m})-r_{\delta'}(\Pi_{m}) \nonumber\\
	&= \int_{\Theta}[R_{\delta_{M}}(\theta)-R_{\delta'}(\theta)]\Pi_{m}(\mu_{1},\mu_{2})d\underline{\mu} \nonumber\\
	&\geq \int_{D}[R_{\delta_{M}}(\theta)-R_{\delta^{'}}(\theta)]\Pi_{m}(\mu_{1},\mu_{2})d\underline{\mu}\nonumber\\
	&\geq \frac{\epsilon}{2}~\mathbb{P}_{\Pi_{m}}\left(\left(R_{[2]}-R_{[1]}\right)\in D \right)  , ~~~m=1,2,\ldots.
	\end{align}
	Now using $(3.3), (3.5)$ and $(3.7)$, we get
	
	\begin{align}
	~~~~~~~\liminf\limits_{m \rightarrow \infty} \frac{1}{ m^2 \left(\frac{n_1+n_2}{\sigma^2}\right) \left(\frac{n_1+n_2}{\sigma^2}+\frac{1}{m^2}\right)\mathbb{P}_{\Pi_{m}}\left(\left(R_{[2]}-R_{[1]}\right) \in D\right)} & \geq \frac{\epsilon}{2}. 
	\end{align}	
	However, for any $0 \leq a < b < \infty$, we have
	\begin{align*}
	\mathbb{P}_{\Pi_{m}}\left(a <R_{[2]}-R_{[1]} \leq b\right)&=2\left[\Phi\left(\frac{b}{\sqrt{2}m}\right)-\Phi\left(\frac{a}{\sqrt{2}m}\right)\right]\\
	&=\frac{\sqrt{2}(b-a)}{m}\phi(d^*),~~\frac{a}{\sqrt{2}m}<d^*<\frac{b}{\sqrt{2}m}.
	\end{align*}
	Therefore,

	\begin{align}
	&\liminf\limits_{m \rightarrow \infty} \frac{1}{ m^2 \left(\frac{n_1+n_2}{\sigma^2}\right) \left(\frac{n_1+n_2}{\sigma^2}+\frac{1}{m^2}\right)\mathbb{P}_{\Pi_{m}}\left(\left(R_{[2]}-R_{[1]}\right) \in D\right)}\\
	&=\liminf\limits_{m \rightarrow \infty} \frac{1}{ m^2 \left(\frac{n_1+n_2}{\sigma^2}\right) \left(\frac{n_1+n_2}{\sigma^2}+\frac{1}{m^2}\right)\frac{\sqrt{2}(b-a)}{m}\phi(d^*)}=0,
	\end{align}	
	which contradicts $(3.8)$. Hence, the theorem follows.
\end{proof}

\begin{remark}
	Note that $\delta_{0}(\underline{X})=\overline{X}_S$ is an admissible and minimax estimator based on the single stage data (see \cite{sackrowitz1986evaluating}). Using Lemma $3.1~(ii)$ of \cite{masihuddin2021equivariant} and Lemma $2.2~(ii)$ we have
	$$R\left(\underline{\mu},\delta_{M}\right)=\frac{\sigma^2}{n_1+n_2}< \frac{\sigma^2}{n_1}=R\left(\underline{\mu},\delta_{0}\right),~ \forall~ \underline{\mu} \in \Theta,$$
	i.e., under the two stage drop-the-losers design, the estimator $\delta_{0}\left(\underline{X}\right)=\overline{X}_S$ is dominated by the two- stage admissible and minimax estimator $\delta_{M}$.
\end{remark}
 \par

%\end{proof}

%motivation...... envisage dominating estimators \cite{robertson2019conditionally}......
\section{Inadmissibility of the UMVCUE}
It is a well known result that for Gaussian populations there does not exists an unbiased estimator of the selected treatment mean $\mu_{S}$ using single stage data only (see \cite{putter1968technical},  \cite{vellaisamy2009note} and \cite{masihuddin2021equivariant}). A similar result was provided by \cite{tappin1992unbiased} for binomial populations. For the goal of estimating $\mu_{S}$, conditioned on $S$, \cite{bowden2008unbiased} proposed the following two stage UMVCUE 
\begin{equation}
\delta_{BG}(\underline{X})=\frac{n_1\overline{X}_S+n_2 \overline{Y}}{n_1+n_2}+\psi_{BG}\left(D_1,D_2\right)
\end{equation}
with, $\psi_{BG}\left(D_1,D_2\right)=-\sigma\sqrt{\frac{n_1}{n_2(n_1+n_2)}}\frac{\phi\left(\sqrt{\frac{n_1}{n_2(n_1+n_2)}}\frac{\left(n_2D_2-(n_1+n_2)D_1\right)}{\sigma}\right)}{\Phi\left(\sqrt{\frac{n_1}{n_2(n_1+n_2)}}\frac{\left(n_2D_2-(n_1+n_2)D_1\right)}{\sigma}\right)}$,  $D_1=\overline{X}_{3-S}-\overline{X}_S$ and $D_2=\overline{Y}-\overline{X}_{S}$. It is worth mentioning here that, conditioned on $S$, $\overline{Y}$ is an unbiased estimator of $\mu_{S}$, $\left(T_1,T_2\right)=\left(\frac{n_1\overline{X}_S+n_2\overline{Y}}{n_1+n_2},\overline{X}_{3-S}\right)$ is a complete-sufficient for $\underline{\mu}$ and $\mathbb{E}_{\underline{\mu}}\left(\overline{Y}|(T_1,T_2)\right)=\delta_{BG}(\underline{X})$.
Clearly $\delta_{BG}(\underline{X}) \in \mathcal{D}_1$, where $\mathcal{D}_1$ is the class of location and permutation equivariant estimators defined in the last section (see (3.6)). In order to find an improvement over $\delta_{BG}$, we now use the idea of \cite{brewster1974improving} to obtain a sufficient condition for inadmissibility of an arbitrary location and permutation equivariant estimators $\delta_{\psi}(\underline{X}) = \frac{n_1\overline{X}_S+n_2\overline{Y}}{n_1+n_2} +\psi(D_1,D_2) $, for some function function $\psi(\cdot):\left(-\infty,0 \right] \times 
\mathbb{R} \to \mathbb{R}$, of $\mu_{S}$. As a direct consequence of this result, we will see later in the section that the estimator $\delta_{BG}(\cdot)$, given in $(4.1)$, is inadmissible and a dominating estimator is provided.\par
Recall that $\overline{X}_{3-S} = \min(\overline{X}_1 ,\overline{X}_2), \overline{X}_S = \max (\overline{X}_1 ,\overline{X}_2)$; $D_1 = \overline{X}_{3-S} -\overline{X}_{S}$;  $D_2 =  \overline{Y} - \overline{X}_{S}$. Let $S_1=\overline{X}_{S}-\mu_S$. 
The following lemma will be useful in obtaining the main result of this section.
\begin{lemma} \label{S4,L2}
	Let $d_1 \in (-\infty,0]$ and $d_2 \in \mathbb{R}$ be fixed. Then, 	
\begin{itemize}
\item[(a)] the conditional p.d.f. of $S_1$, given $\left(D_1, D_2\right)=(d_1,d_2)$, is given by
	\begin{align*}
		f_{1, \underline{\mu}}(s|d_1,d_2)&= \frac{\left[\phi\left(\frac{\sqrt{n_1}}{\sigma}(s+d_1-\theta)\right)+\phi\left(\frac{\sqrt{n_1}}{\sigma}(s+d_1+\theta)\right)\right]\phi\left(\frac{\sqrt{n_2}}{\sigma}(s+d_2)\right)\phi\left(\frac{\sqrt{n_1}}{\sigma}s\right)}{\displaystyle{\int_{-\infty}^{\infty}}\left[\phi\left(\frac{\sqrt{n_1}}{\sigma}(t+d_1-\theta)\right)+\phi\left(\frac{\sqrt{n_1}}{\sigma}(t+d_1+\theta)\right)\right]\phi\left(\frac{\sqrt{n_2}}{\sigma}(t+d_2)\right)\phi\left(\frac{\sqrt{n_1}}{\sigma}t\right) dt } ~,s\in \mathbb{R};
	\end{align*}	
\item[(b)]the conditional expectation of $S_1$, given $\left(D_1, D_2\right)=(d_1,d_2)$, is given by
$$\mathbb{E}_{\underline{\mu}}\left[S_1|(D_1,D_2)=(d_1,d_2)\right]=\frac{n_1 \theta}{n_2+2n_1} \left\{\frac{1-e^{-\frac{2 \theta}{\sigma^2}\left(\frac{n_1((n_1+n_2)d_1-n_2d_2)}{2n_1+n_2}\right)}}{1+e^{-\frac{2 \theta}{\sigma^2}\left(\frac{n_1((n_1+n_2)d_1-n_2d_2)}{2n_1+n_2}\right)}}\right\}-\frac{n_1d_1+n_2d_2}{2n_1+n_2}.$$

	\end{itemize}
		\end{lemma}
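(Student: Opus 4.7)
The plan is to handle parts (a) and (b) in sequence, with (a) producing the conditional density via a direct change of variables and (b) following by recognizing the density as a two-component Gaussian mixture.

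For part (a), I would compute the joint density of $(S_1, D_1, D_2)$ by a change of variables and then divide by the marginal of $(D_1, D_2)$. By the label-swap symmetry of the design, the joint distribution of $(S_1, D_1, D_2)$ depends on $\underline{\mu}$ only through $\theta$, so I assume WLOG that $\mu_1 \leq \mu_2$, so that $\theta = \mu_2 - \mu_1$. Writing $\alpha = \sqrt{n_1}/\sigma$ and $\beta = \sqrt{n_2}/\sigma$, the joint density of $(\overline{X}_1, \overline{X}_2, \overline{Y})$ splits piecewise: on $\{x_1 \geq x_2\}$ it equals $\alpha^2\beta\,\phi(\alpha(x_1-\mu_1))\phi(\alpha(x_2-\mu_2))\phi(\beta(y-\mu_1))$, and on $\{x_1 < x_2\}$ it equals the analogous expression with $\mu_2$ replacing $\mu_1$ inside the last $\phi$. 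On each piece the map $(x_1, x_2, y) \mapsto (s, d_1, d_2) = (\overline{X}_S - \mu_S,\, \overline{X}_{3-S} - \overline{X}_S,\, \overline{Y} - \overline{X}_S)$ has unit Jacobian; adding the two contributions (both automatically satisfying $d_1 \leq 0$) gives
$$f_{S_1, D_1, D_2}(s, d_1, d_2) = \alpha^2\beta\,\phi(\alpha s)\phi(\beta(s+d_2))\bigl[\phi(\alpha(s+d_1-\theta)) + \phi(\alpha(s+d_1+\theta))\bigr],$$
and dividing by its $s$-integral yields the stated conditional density (the prefactor $\alpha^2\beta$ cancels).

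For part (b), I would read the conditional density in (a) as a two-component Gaussian mixture in $s$. Within each term of the bracket, the product of three $\phi$-factors is proportional (in $s$) to a single Gaussian: the first term has precision $\kappa = 2\alpha^2 + \beta^2 = (2n_1+n_2)/\sigma^2$ and mean $-\gamma_-/\kappa$, where $\gamma_- = \alpha^2(d_1 - \theta) + \beta^2 d_2$; the second has the same precision and mean $-\gamma_+/\kappa$, where $\gamma_+ = \alpha^2(d_1 + \theta) + \beta^2 d_2$. Completing the square also produces mixture weights proportional to $e^{-A_-/2}$ and $e^{-A_+/2}$, where $A_\pm$ are the constant residuals of the completed-square exponents. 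Hence the conditional expectation equals the mixture mean
$$\mathbb{E}_{\underline{\mu}}\bigl[S_1 \mid D_1 = d_1, D_2 = d_2\bigr] = -\frac{1}{\kappa}\cdot\frac{\gamma_- e^{-A_-/2} + \gamma_+ e^{-A_+/2}}{e^{-A_-/2} + e^{-A_+/2}}.$$

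The last step is algebraic. Setting $B = (A_+ - A_-)/2$ and using $\gamma_+ - \gamma_- = 2\alpha^2\theta$ together with $\gamma_+ + \gamma_- = 2(\alpha^2 d_1 + \beta^2 d_2)$, a direct expansion gives $B = \frac{2\theta n_1\bigl((n_1+n_2)d_1 - n_2 d_2\bigr)}{\sigma^2(2n_1+n_2)}$, which is precisely the exponent in the lemma. Applying the identity $\frac{a + b e^{-B}}{1 + e^{-B}} = \frac{a+b}{2} + \frac{a-b}{2}\cdot\frac{1 - e^{-B}}{1 + e^{-B}}$ with $a = \gamma_-$, $b = \gamma_+$, and then substituting $\alpha^2/\kappa = n_1/(2n_1+n_2)$ together with $(\alpha^2 d_1 + \beta^2 d_2)/\kappa = (n_1 d_1 + n_2 d_2)/(2n_1+n_2)$, produces the claimed formula. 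The main obstacle is the bookkeeping in computing $A_+ - A_-$: keeping the two quadratic forms straight so that the lemma's exponent is recovered exactly; the rest is routine Gaussian integration and symbolic rearrangement.
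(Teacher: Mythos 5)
Your proposal is correct and follows essentially the same route as the paper: both reduce to the joint density of $(S_1,D_1,D_2)$, written as the sum of the two label-contributions with arguments $s+d_1\mp\theta$, and then read off the conditional law as a two-component Gaussian mixture with common precision $(2n_1+n_2)/\sigma^2$, whose weighted component means give part (b) after the same log-odds computation (your $B$ equals twice the paper's exponent $\frac{n_1((n_1+n_2)d_1-n_2d_2)\theta}{(2n_1+n_2)\sigma^2}$, consistently with the weight ratio $e^{-B}$). The only cosmetic difference is in part (a), where you obtain the joint density by a unit-Jacobian change of variables while the paper derives the conditional c.d.f. as a limit of ratios of small-rectangle probabilities via L'H\^opital's rule; both yield the identical density.
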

	\begin{proof} See the Appendix.
	\end{proof}
It is worth noting that the risk function of the estimator $\delta_{\psi}(\underline{X})$, given by $(3.6)$, depends on $\underline{\mu}=(\mu_1,\mu_2)$ only through $\theta$. Therefore, for notational convenience, we denote $R(\underline{\mu},\delta_{\psi})$ by $R_{\theta}(\delta_{\psi})$. The risk function of an estimator of the form $(3.6)$ is 
\begin{align}
R_{\theta}(\delta_{\psi})&=\mathbb{E}_{\underline{\mu}}\left[R_1\left(\theta,\psi\left(D_1,D_2\right)\right)\right],~ \theta \geq 0,
\end{align}
where, for any fixed $d_1 \in (-\infty,0]$ and $d_2 \in \mathbb{R}$,
\begin{align}
R_1\left(\theta,\psi\left(d_1,d_2\right)\right)&=\mathbb{E}_{\underline{\mu}} \left[\left(\frac{n_1\overline{X}_S+n_2\overline{Y}}{n_1+n_2}+\psi(D_1,D_2)-\mu_S\right)^2\Bigg| (D_1,D_2)=(d_1,d_2)\right], ~ \theta \geq 0.
\end{align}
Our objective is to minimize the risk function $R_{\theta}(\delta_{\psi})$ given by $(4.2)$ with respect to $\psi$. This objective may be achieved by minimizing the conditional risk $R_1\left(\theta,\psi\left(d_1,d_2\right)\right)$, given by $(4.3)$. For any fixed $\theta \in [0,\infty)$, the choice of $\psi$ that minimizes $(4.3)$ is obtained as
\begin{align}
\psi_{\theta}(d_1,d_2)&=-\mathbb{E}_{\underline{\mu}} \left[\left(\frac{n_1\overline{X}_S+n_2\overline{Y}}{n_1+n_2}-\mu_S\right)\Big| (D_1,D_2)=(d_1,d_2)\right] \nonumber \\
&=-\mathbb{E}_{\underline{\mu}}\left[S_1|(D_1,D_2)=(d_1,d_2)\right]-\frac{n_2d_2}{n_1+n_2}
\end{align}
 Further, an application of Lemma $4.1(b)$ implies,
\begin{align*}
\psi_{\theta}(d_1,d_2)&= \frac{n_1 \theta}{n_2+2n_1} \left\{\frac{1-e^{-\frac{2 \theta}{\sigma^2}\left(\frac{n_1((n_1+n_2)d_1-n_2d_2)}{2n_1+n_2}\right)}}{1+e^{-\frac{2 \theta}{\sigma^2}\left(\frac{n_1((n_1+n_2)d_1-n_2d_2)}{2n_1+n_2}\right)}}\right\}+\frac{n_1((n_1+n_2)d_1-n_2d_2)}{(n_1+n_2)(2n_1+n_2)},~\theta \geq0.
\end{align*}
It can be seen that $\psi_{\theta}(d_1,d_2)$ is increasing in $\theta \in [0,\infty)$, whenever $d_1 \leq \frac{n_2 d_2}{n_1+n_2}$, and decreasing in $\theta \in [0,\infty)$, whenever $d_1 > \frac{n_2 d_2}{n_1+n_2}.$ 
Thus, we have
\begin{equation}
\inf\limits_{\theta \geq 0} \psi_{\theta}(d_1,d_2)= \begin{cases}
\frac{n_1((n_1+n_2)d_1-n_2d_2)}{(n_1+n_2)(2n_1+n_2)}, & \text{if $d_1 \leq \frac{n_2 d_2}{n_1+n_2}$ }\\
-\infty, & \text{if $d_1 > \frac{n_2 d_2}{n_1+n_2} $}
\end{cases}=\psi_{*}(d_1,d_2),~ (\text{say})
\end{equation}
and 
\begin{equation}
\sup\limits_{\theta \geq 0} \psi_{\theta}(d_1,d_2)= \begin{cases}
\infty, & \text{if $d_1 \leq \frac{n_2 d_2}{n_1+n_2}$ }\\
\frac{n_1((n_1+n_2)d_1-n_2d_2)}{(n_1+n_2)(2n_1+n_2)}, & \text{if $d_1 > \frac{n_2 d_2}{n_1+n_2} $}
\end{cases}=\psi^{*}(d_1,d_2),~ (\text{say}).
\end{equation}
	Note that, for any fixed $d_1 \in (-\infty,0]$, $d_2 \in \mathbb{R}$ and $\underline{\mu} \in \Theta$, the conditional risk, given by $(4.3)$,
%$$R_1\left(\mu, \Psi(w)\right)=\mathbb{E}_{\underline{\theta}}\left[\left(\frac{Z_1-S\Psi(w)-\mu_{M}}{\sigma}\right)^2 \bigg|W=w\right]$$
is strictly decreasing on $ \left(-\infty, \psi_{\theta}(d_1,d_2) \right)$ and strictly increasing on $ \left(\psi_{\theta}(d_1,d_2), \infty \right)$, where $ \psi_{\theta}(d_1,d_2),~ \theta \geq 0,$ is defined by $(4.4)$. Using this fact along with $(4.5)$ and $(4.6)$, we also note that:\\ $(i)$ for any fixed $d_1 \in (-\infty,0]$, $d_2 \in \mathbb{R}$ and $\underline{\mu} \in \Theta$, $R_1\left(\theta,\psi\left(d_1,d_2\right)\right)$ is strictly decreasing in $\psi\left(d_1,d_2\right)  \in (-\infty,\psi_{*}(d_1, d_2))$;\\
 $(ii)$ for any fixed $d_1 \in (-\infty,0]$, $d_2 \in \mathbb{R}$ and  $\underline{\mu} \in \Theta$, $R_1\left(\theta,\psi\left(d_1,d_2\right)\right)$ is strictly increasing in $\psi\left(d_1,d_2\right)  \in \left(\psi^{*}(d_1, d_2), \infty \right)$. \par 
Thus, we have the following theorem.

\begin{theorem}
	 Let  $\delta_{\psi}\left(\underline{X}\right)=\frac{n_1\overline{X}_S+n_2\overline{Y}}{n_1+n_2}+\psi\left(D_1,D_2\right) $, for a real valued function $\psi $ defined on $:(-\infty,0] \times \mathbb{R}$, be a location and permutation equivariant estimator of $\mu_{S}$. Suppose that
	$$\mathbb{P}_{\underline{\mu}}\left[\psi(D_1, D_2)< \frac{n_1((n_1+n_2)D_1-n_2D_2)}{(n_1+n_2)(2n_1+n_2)} \leq 0 \right] + \mathbb{P}_{\underline{\mu}}\left[ 0 \leq \frac{n_1((n_1+n_2)D_1-n_2D_2)}{(n_1+n_2)(2n_1+n_2)} \leq \psi(D_1, D_2) \right]  >0,~ \text{for some}~ \underline{\mu} \in \Theta, $$
	where $\psi_*(D_1, D_2) $ and $\psi^*(D_1,D_2) $ are defined in $(4.5)$ and $(4.6)$ respectively.
	Then, the estimator $\delta_{\psi}(\cdot)$ is inadmissible for estimating $\mu_{S}$ under the squared error loss function \eqref{S2.E2} and is dominated by
	\begin{equation}
	\delta_{\psi}^I\left(\underline{X}\right)=\begin{cases}
	\frac{n_1\left(\overline{X}_1+\overline{X}_2\right)+n_2\overline{Y}}{2n_1+n_2}, & \text{ if }~ \psi(D_1, D_2) < \frac{n_1}{2n_1+n_2}\left(\overline{X}_{3-S}-\frac{n_1\overline{X}_{S}+n_2\overline{Y}}{n_1+n_2}\right) \leq 0~ ,\\
	 & \text{ or }~ 0 \leq \frac{n_1}{2n_1+n_2}\left(\overline{X}_{3-S}-\frac{n_1\overline{X}_{S}+n_2\overline{Y}}{n_1+n_2}\right) < \psi(D_1, D_2),\\
		\delta_{\psi}\left(\underline{X}\right), & \text{otherwise}.
	\end{cases}
	\end{equation}

As an immediate consequence of the above theorem we have the following corollary dealing with the admissibility of the UMVCUE $\delta_{BG}(\underline{X})$ and $\delta_{0}(\underline{X})=\overline{X}_S$.
\begin{corollary}
	\begin{itemize}
	\item [(a)] 
	The UMVCUE ~$\delta_{BG}(\underline{X})$, given in $(4.1)$, is inadmissible for estimation $\mu_{S}$ under the squared error loss function \eqref{S2.E2} and is dominated by
	\begin{align}
	\delta^I_{BG}(\underline{X})&=\begin{cases}
	\frac{n_1(\overline{X}_1+\overline{X}_2)+n_2 \overline{Y}}{2n_1+n_2}, & \text{ if }~ \overline{X}_{3-S} \leq \frac{n_1\overline{X}_S+n_2\overline{Y}}{n_1+n_2} \leq \overline{X}_{3-S} + \frac{\left(2n_1+n_2\right)\sigma}{\sqrt{n_1 n_2(n_1+n_2)}}\frac{\phi\left(Q\right)}{\Phi\left(Q\right)} \\
	\delta_{BG}(\underline{X}), & \text{otherwise}.
	\end{cases}
	%&=\begin{cases}
	%T_1-\frac{n_1(T_2-T_1)}{2n_1+n_2}, & \text{ if }~ -\frac{\sqrt{n_1 n_2}}{n_2\sqrt{n_1+n_2}}\frac{\phi\left(Q\right)}{\Phi\left(Q\right)}  \leq \frac{n_1(T_2-T_1)}{2n_1+n_2}~ \text{and} ~T_2 \leq T_1\\
%	& \text{or},~ -\frac{\sqrt{n_1 n_2}}{n_2\sqrt{n_1+n_2}}\frac{\phi\left(Q\right)}{\Phi\left(Q\right)} > \frac{n_1(T_2-T_1)}{2n_1+n_2}~ \text{and} ~T_2 \geq T_1,\\
%	\delta_{BG}(\underline{X}), & \text{otherwise}.
%	\end{cases} 
	\end{align}
	where, $Q=\sqrt{\frac{n_1}{n_2(n_1+n_2)}}\frac{\left(n_1\overline{X}_S+n_2\overline{Y}-(n_1+n_2)\overline{X}_{3-S}\right)}{\sigma}$.
	\item [(b)] The single stage admissible and minimax estimator $\delta_{0}(\underline{X})=\overline{X}_S=\frac{n_1\overline{X}_S+n_2 \overline{Y}}{n_1+n_2}-\frac{n_2D_2}{n_1+n_2}$  is also inadmissible for estimating $\mu_{S}$ and is dominated by 
	\begin{align}
	\delta^I_{0}(\underline{X})&=\begin{cases}
		\frac{n_1(\overline{X}_1+\overline{X}_2)+n_2 \overline{Y}}{2n_1+n_2}, & \text{ if }~ -\frac{n_2(\overline{Y}-\overline{X}_S)}{n_1+n_2} <\frac{n_1}{2n_1+n_2}\left(\overline{X}_{3-S}-\frac{n_1\overline{X}_S+n_2\overline{Y}}{n_1+n_2}\right) \leq 0 \\
	& \text{or},~ 0 \leq \frac{n_1}{2n_1+n_2}\left(\overline{X}_{3-S}-\frac{n_1\overline{X}_S+n_2\overline{Y}}{n_1+n_2}\right)<-\frac{n_2(\overline{Y}-\overline{X}_S)}{n_1+n_2},\\
	\delta_{0}(\underline{X}), & \text{otherwise}.
	\end{cases} 
	\end{align}
\end{itemize}
\end{corollary}
\end{theorem}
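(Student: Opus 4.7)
The plan is to execute the Brewster--Zidek pointwise-risk improvement that the preceding discussion has essentially set up. The key inputs are already in place: for each fixed $(d_1,d_2) \in (-\infty,0] \times \mathbb{R}$, the conditional risk $R_1(\theta,\psi(d_1,d_2))$ in (4.3) is strictly unimodal in the argument $\psi(d_1,d_2)$ with unique minimizer $\psi_\theta(d_1,d_2)$ from (4.4); as $\theta$ varies over $[0,\infty)$, the minimizers sweep out an interval bracketed by $\psi_*(d_1,d_2)$ and $\psi^*(d_1,d_2)$ from (4.5)--(4.6); and the finite endpoint of this bracket (whichever one exists in the given sign regime) coincides with the Brewster--Zidek surrogate $\psi_{BZ}(d_1,d_2) := \frac{n_1((n_1+n_2)d_1-n_2 d_2)}{(n_1+n_2)(2n_1+n_2)}$, which after substituting the definitions of $D_1,D_2$ equals $\frac{n_1}{2n_1+n_2}\bigl(\overline{X}_{3-S} - \frac{n_1\overline{X}_S + n_2\overline{Y}}{n_1+n_2}\bigr)$.

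I would split the sample space of $(D_1,D_2)$ into three pieces. Region $A_1 := \{\psi(D_1,D_2) < \psi_{BZ}(D_1,D_2) \leq 0\}$ is exactly the regime $(n_1+n_2)D_1 \leq n_2 D_2$, where $\psi_*(D_1,D_2) = \psi_{BZ}(D_1,D_2)$, and therefore $\psi_\theta(D_1,D_2) \geq \psi_{BZ}(D_1,D_2) > \psi(D_1,D_2)$ for every $\theta \geq 0$. Since $R_1(\theta,\cdot)$ is strictly decreasing on $(-\infty,\psi_\theta(D_1,D_2))$, replacing $\psi(D_1,D_2)$ by $\psi_{BZ}(D_1,D_2)$ strictly lowers the conditional risk for every $\theta$. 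Region $A_2 := \{0 \leq \psi_{BZ}(D_1,D_2) < \psi(D_1,D_2)\}$ is the symmetric regime where $\psi^*(D_1,D_2) = \psi_{BZ}(D_1,D_2)$, and the same replacement strictly lowers the conditional risk by strict increase of $R_1(\theta,\cdot)$ to the right of $\psi_\theta$. On the complement of $A_1 \cup A_2$ the rule $\psi$ is left untouched. Substituting $\psi_{BZ}$ into $\delta_\psi$ and adding the base term $\frac{n_1\overline{X}_S + n_2\overline{Y}}{n_1+n_2}$ collapses algebraically to $\frac{n_1(\overline{X}_1 + \overline{X}_2) + n_2 \overline{Y}}{2n_1+n_2}$, which is exactly the first branch of $\delta_\psi^I$ in (4.7). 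Integrating the pointwise inequalities via (4.2) yields $R_\theta(\delta_\psi^I) \leq R_\theta(\delta_\psi)$ for every $\theta \geq 0$, and the positivity hypothesis $\mathbb{P}_{\underline{\mu}}(A_1 \cup A_2) > 0$ for some $\underline{\mu}$ promotes this to strict inequality at the corresponding $\theta = \theta_2 - \theta_1$, yielding inadmissibility.

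The main (largely bookkeeping) obstacle is matching ``$\psi_{BZ} \leq 0$'' with the upward-move regime ($\psi_{BZ} = \psi_*$) and ``$\psi_{BZ} \geq 0$'' with the downward-move regime ($\psi_{BZ} = \psi^*$); this is precisely why the theorem's hypothesis is a union of two events with complementary sign restrictions on $\psi_{BZ}$. For the corollary, part (a) is the direct specialization with $\psi = \psi_{BG}$: because $-\phi(\cdot)/\Phi(\cdot) < 0$, one always has $\psi_{BG}(D_1,D_2) < 0$, so only region $A_1$ is relevant, and the inequality $\psi_{BG} < \psi_{BZ} \leq 0$, after substituting the explicit $\phi/\Phi$ ratio with argument $Q$, rearranges to the two-sided sandwich on $\frac{n_1\overline{X}_S + n_2 \overline{Y}}{n_1+n_2}$ displayed in (4.8). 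Part (b) is the specialization with $\psi(D_1,D_2) = -\frac{n_2 D_2}{n_1+n_2}$ (since $\delta_0(\underline{X}) = \overline{X}_S$): comparing this $\psi$ with $\psi_{BZ}$ in each of the two sign regimes produces the two conditions in (4.9), and by the same algebraic collapse above the dominating rule coincides with $\frac{n_1(\overline{X}_1 + \overline{X}_2) + n_2 \overline{Y}}{2n_1+n_2}$.
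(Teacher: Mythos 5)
Your proposal is correct and is essentially the paper's own argument: the paper proves Theorem 4.1 through the development in (4.2)--(4.6) together with the monotonicity observations (i)--(ii), i.e.\ the same Brewster--Zidek pointwise conditional-risk improvement, with the same identification of the finite endpoint of $\{\psi_\theta(d_1,d_2):\theta\ge 0\}$ with $\frac{n_1((n_1+n_2)d_1-n_2d_2)}{(n_1+n_2)(2n_1+n_2)}$ and the same algebraic collapse of the truncated rule to $\frac{n_1(\overline{X}_1+\overline{X}_2)+n_2\overline{Y}}{2n_1+n_2}$. Your specializations for the corollary (using $\psi_{BG}<0$ so only the first region is active, and $\psi=-\tfrac{n_2D_2}{n_1+n_2}$ for $\delta_0$) likewise match the paper.
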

\begin{remark}
Under the single stage sample set-up, \cite{dahiya1974estimation} considered various estimators for estimating the selected treatment mean $\mu_{S}$. It follows from Theorem 4.1 that, under the two stage DLD, all the estimators proposed by \cite{dahiya1974estimation} are inadmissible for estimating $\mu_{S}$ w.r.t. the criterion of mean squared error.
\end{remark}
\subsection{Some additional admissibility results}
Note that the random estimand $\mu_{S}$ depends on observations only through the sufficient statistic $\underline{T}=\left(\frac{n_1\overline{X}_S+n_2\overline{Y}}{n_1+n_2},\overline{X}_{3-S},S\right)$\\$=\left(T_1,T_2,S\right)$. Then, for the goal of estimating $\mu_{S}$, an improved estimator $\delta^{I}_{\psi}$ obtained using Theorem 4.1, can be further improved in terms of MSE if it is not purely a function of the sufficient statistic $\underline{T}=\left(\frac{n_1\overline{X}_S+n_2\overline{Y}}{n_1+n_2},\overline{X}_{3-S},S\right)=\left(T_1,T_2,S\right)$. In fact any location and permutation equivariant estimator $\delta_{\psi}$ of $\mu_{S}$ that is not a function of the sufficient statistic can be improved using the Rao-Blackwell theorem on $\delta_{\psi}$ to obtain a dominating estimator as $\delta^*_{\psi}(\underline{T})=\mathbb{E}_{\underline{\mu}}\left(\delta_{\psi}(\underline{X})\Big|\underline{T}\right)$.
For estimating $\mu_{S}$ under the squared error loss \eqref{S2.E2}, suppose we have an estimator of the type
\begin{align*}
\delta_{\psi}(\underline{X})&=\frac{n_1\overline{X}_S+n_2 \overline{Y}}{n_1+n_2} +\psi\left(D_1,D_2\right) \\
&=\frac{n_1\overline{X}_S+n_2 \overline{Y}}{n_1+n_2} +\psi\left(\overline{X}_{3-S}-\overline{X}_S,\overline{Y}-\overline{X}_S\right).
\end{align*} 
Then the estimator 
\begin{align*}
\delta^*_{\psi}(\underline{X})&=\mathbb{E}_{\underline{\mu}}\left[\frac{n_1\overline{X}_S+n_2 \overline{Y}}{n_1+n_2} +\psi\left(\overline{X}_{3-S}-\overline{X}_S,\overline{Y}-\overline{X}_S\right)\Big|\underline{T}\right] \\
&=T_1+\mathbb{E}_{\underline{\mu}}\left[\psi\left(\overline{X}_{3-S}-\overline{X}_S,\overline{Y}-\overline{X}_S\right)\Big|\underline{T}\right],
\end{align*}
dominates $\delta_{\psi}$, i.e. $R(\mu_{S},\delta^*_{\psi}) < R(\mu_{S},\delta_{\psi})$ $\forall~ \underline{\mu} \in \Theta$ (unless $\delta_{\psi}=\delta^*_{\psi}$ a.e.). \par
\textbf{Example}: 
For estimating the selected treatment mean $\mu_{S}$ under the two-stage drop-the-loser design, the single stage admissible and minimax estimator $\delta_{0}(\underline{X})=\overline{X}_S$ can also be improved using the Rao- Blackwell theorem. In terms of MSE, $\delta_{0}$ is dominated by an estimator $\delta^{RB}_{0}(T_1,T_2,S)=\mathbb{E}\left(\delta_{0}(\underline{X})|\underline{T}\right)$ given as
\begin{align}
\delta^{RB}_{0}(\underline{T})&=T_1+ \frac{\sqrt{n_2}\sigma}{\sqrt{n_1(n_1+n_2)}}\frac{\phi\left(\frac{\sqrt{n_1(n_1+n_2)}}{\sqrt{n_2}\sigma}(T_1-T_2)\right)}{\Phi\left(\frac{\sqrt{n_1(n_1+n_2)}}{\sqrt{n_2}\sigma}(T_1-T_2)\right)}.
\end{align}
\textbf{Derivation of $\delta^{RB}_{0}$}:
Let $\overline{Y}_1$, $\overline{Y}_2$ be as defined in the proof of Lemma 2.2, given in the Appendix.	Note that, for $(t_1,t_2) \in \mathbb{R}^2$, $\delta^{RB}_{0}(t_1,t_2,1)=\mathbb{E}_{\underline{\mu}}\left(\overline{X}_1\Big|\frac{n_1\overline{X}_1+n_2\overline{Y}_1}{n_1+n_2}=t_1,\overline{X}_2=t_2,\overline{X}_1>t_2\right)$ and\\ $\delta^{RB}_{0}(t_1,t_2,2)=\mathbb{E}_{\underline{\mu}}\left(\overline{X}_2\Big|\frac{n_1\overline{X}_2+n_2\overline{Y}_2}{n_1+n_2}=t_1,\overline{X}_1=t_2,\overline{X}_2>t_2\right)$. We have, 
	$$ \begin{bmatrix}\overline{X}_1 \\ \frac{n_1\overline{X}_1+n_2\overline{Y}_1}{n_1+n_2} \\ \overline{X}_2 \end{bmatrix} \sim N_3 \left(\begin{bmatrix}\mu_1 \\ \mu_1 \\ \mu_{2} \end{bmatrix} , \begin{bmatrix}\frac{\sigma^2}{n_1} & \frac{\sigma^2}{n_1+n_2}&0 \vspace{2mm}\\ \frac{\sigma^2}{n_1+n_2} & \frac{\sigma^2}{n_1+n_2}&0\\
	0&0&\frac{\sigma^2}{n_1} \end{bmatrix} \right).$$
	
Given that $ \left( \frac{n_1\overline{X}_1+n_2\overline{Y}_1}{n_1+n_2},\overline{X}_2\right)=(t_1,t_2)$, $\overline{X}_1$ follows univariate Gaussian distribution with mean $\mu^*=t_1$ and variance $\sigma^2_1=\frac{n_2\sigma^2}{n_1(n_1+n_2)}$. Therefore,
\begin{align*}
\delta^{RB}_{0}(t_1,t_2,1)&=\mathbb{E}_{\underline{\mu}}\left(\overline{X}_1\Bigg|\frac{n_1\overline{X}_1+n_2\overline{Y}_1}{n_1+n_2}=t_1,\overline{X}_2=t_2,\overline{X}_1>t_2\right)\\&=\frac{\displaystyle{\int_{t_2}^{\infty}}\frac{x}{\sigma_1}\phi\left(\frac{x-t_1}{\sigma_1}\right)dx}{\displaystyle{\int_{t_2}^{\infty}}\frac{1}{\sigma_1}\phi\left(\frac{x-t_1}{\sigma_1}\right)dx}\\
&=\frac{\displaystyle{\int_{\frac{t_2-t_1}{\sigma_1}}^{\infty}}(t_1+\sigma_1y)\phi\left(y\right)dy}{\displaystyle{\int_{\frac{t_2-t_1}{\sigma_1}}^{\infty}}\phi\left(y\right)dy}=t_1+\sigma_1\frac{\phi\left(\frac{t_1-t_2}{\sigma_1}\right)}{\Phi\left(\frac{t_1-t_2}{\sigma_1}\right)}.
\end{align*}
By symmetry,
\begin{align*}
\delta^{RB}_{0}(t_1,t_2,2)=\mathbb{E}_{\underline{\mu}}\left(\overline{X}_2\Bigg|\frac{n_1\overline{X}_2+n_2\overline{Y}_2}{n_1+n_2}=t_1,\overline{X}_1=t_2,\overline{X}_2>t_2\right)&=t_1+\sigma_1\frac{\phi\left(\frac{t_1-t_2}{\sigma_1}\right)}{\Phi\left(\frac{t_1-t_2}{\sigma_1}\right)}.
\end{align*}
Therefore, %\begin{align*}
$\delta^{RB}_{0}(\underline{T})=T_1+ \frac{\sqrt{n_2}\sigma}{\sqrt{n_1(n_1+n_2)}}\frac{\phi\left(\frac{\sqrt{n_1(n_1+n_2)}}{\sqrt{n_2}\sigma}(T_1-T_2)\right)}{\Phi\left(\frac{\sqrt{n_1(n_1+n_2)}}{\sqrt{n_2}\sigma}(T_1-T_2)\right)}.$\\
%\end{align*}
Note that the estimator $\delta^I_{0}$, defined by (4.9), also dominates $\delta_{0}(\underline{X})=\overline{X}_S$.
Since the estimator $\delta^I_{0}(\underline{X})$   is not a function of minimal sufficient statistic $\underline{T}$, it can be further improved by an estimator $\delta_{1}$, defined as
\begin{align*}
&\delta_{1}(\underline{T})\nonumber =\mathbb{E}\left(\delta^I_{0}(\underline{X})|\underline{T}\right)\nonumber\\
%&=\frac{(n_1+n_2)T_1+n_1T_2}{2n_1+n_2}\mathbb{E}\left[ I\left(\left\{-\frac{n_2(\overline{Y}-\overline{X}_S)}{n_1+n_2} <\frac{n_1\left(T_2-T_1\right)}{2n_1+n_2} \leq 0\right\} \bigcup \left\{0 \leq \frac{n_1\left(T_2-T_1\right)}{2n_1+n_2}<-\frac{n_2(\overline{Y}-\overline{X}_S)}{n_1+n_2}\right\} \right)\Big|\underline{T}\right] \nonumber \\
%&~~~+\mathbb{E}\left[ \overline{X}_S I\left(\left\{-\frac{n_2(\overline{Y}-\overline{X}_S)}{n_1+n_2} <\frac{n_1\left(T_2-T_1\right)}{2n_1+n_2} \leq 0\right\}^c \bigcap \left\{0 \leq \frac{n_1\left(T_2-T_1\right)}{2n_1+n_2}<-\frac{n_2(\overline{Y}-\overline{X}_S)}{n_1+n_2}\right\}^c \right)\Big|\underline{T}\right]\\
&=\begin{cases}
\frac{(n_1+n_2)T_1+n_1 T_2}{2n_1+n_2}\mathbb{P}_{\underline{\mu}}\left( -\frac{n_2(\overline{Y}-\overline{X}_S)}{n_1+n_2} <\frac{n_1\left(T_2-T_1\right)}{2n_1+n_2} \Big|\underline{T}\right)+\mathbb{E}_{\underline{\mu}}\left[\overline{X}_S I\left(-\frac{n_2(\overline{Y}-\overline{X}_S)}{n_1+n_2} \geq \frac{n_1\left(T_2-T_1\right)}{2n_1+n_2}\right) \Big|\underline{T}\right],
& \text{ if }~ T_1 > T_2,\vspace{3mm}\\
\frac{(n_1+n_2)T_1+n_1 T_2}{2n_1+n_2}\mathbb{P}_{\underline{\mu}}\left( -\frac{n_2(\overline{Y}-\overline{X}_S)}{n_1+n_2} \geq \frac{n_1\left(T_2-T_1\right)}{2n_1+n_2} \Big|\underline{T}\right)+\mathbb{E}_{\underline{\mu}}\left[\overline{X}_S I\left(-\frac{n_2(\overline{Y}-\overline{X}_S)}{n_1+n_2} < \frac{n_1\left(T_2-T_1\right)}{2n_1+n_2}\right) \Big|\underline{T}\right], &  \text{ if }~ T_1 \leq T_2.
\end{cases} 
\end{align*} 
Using the conditional distribution of $\overline{X}_S$ given $\left(T_1, T_2\right)=\left(t_1, t_2\right)$ obtained in the derivation of the estimator $\delta^{EB}_{0}$ (4.10) we obtain,
\begin{align}
\delta_{1}(\underline{T})&=\begin{cases}
\frac{(n_1+n_2)T_1+n_1 T_2}{2n_1+n_2}\left\{\frac{\Phi\left(\frac{T_1-T_2}{\sigma_1}\right)-\Phi\left(\frac{n_1(T_1-T_2)}{(2n_1+n_2)\sigma_1}\right)}{\Phi\left(\frac{T_1-T_2}{\sigma_1}\right)}\right\}+\frac{\sigma_1\phi\left(\frac{n_1(T_1-T_2)}{(2n_1+n_2)\sigma_1}\right)+T_1\Phi\left(\frac{n_1(T_1-T_2)}{(2n_1+n_2)\sigma_1}\right)}{\Phi\left(\frac{T_1-T_2}{\sigma_1}\right)}, & \text{ if }~ T_1 > T_2,\vspace{3mm}\\
\frac{(n_1+n_2)T_1+n_1 T_2}{2n_1+n_2} , & \text{ if }~ T_1 \leq  T_2.
\end{cases} ,
\end{align}
where, $\sigma_1=\sigma\sqrt{\frac{n_2}{n_1(n_1+n_2)}}$. 
In addition to numerical comparison of performances of various estimators, in the following section, we compare the performance of the estimator $\delta_{1}(\underline{T})$ with $\delta^I_{0}$ and $\delta^{RB}_{0}$ numerically, through simulation, under the criterion of MSE and Bias. In confirmity with our theoretical findings, numerical study suggests that the estimator $\delta_{1}$, defined by (4.11), has better MSE performance than $\delta^{I}_{0}$, given by (4.9). In terms of MSE, we will also see in the next Section that $\delta_{1}$ performs better than $\delta^{RB}_{0}$, given by (4.10).
 \par 
%\newpage
%\textbf{Note}: For derivation of the estimator $\delta^{RB}_{0}$ given in (4.11) see the Appendix.
\section{Simulation Study}
In this section, we consider a simulation study to compare the risk (MSE)  and the bias performances of various estimators of the selected treatment mean ($\mu_{S}$) under the two-stage drop-the-loser design. We consider the following estimators for our numerical study: $\delta_{M}$, $\delta_{BG}$, $\delta^I_{BG}$, $\delta_{0}$, $\delta^I_{0}$, $\delta^{RB}_{0}$ and $\delta_{1}$ (see (2.3), (2.4), (4.8), (4.9), (4.10) and (4.11)). For our simulation study we take $\sigma=1$. Since the MSE and bias of these estimators of $\mu_{S}$, depends on $\underline{\mu}=(\mu_{1},\mu_{2}) \in \Theta$ only through $\theta=\theta_2-\theta_1$ and the first and second stage sample sizes $n_1$ and $n_2$ respectively, we simulate the MSE and the Bias of different estimators against different configurations of $\theta \geq 0$ and sample sizes $n_1$ and $n_2$. For $n_1=5,10,15$ and $n_2=5,10,15$, the simulated values of the MSE and the Bias based on 10,000 simulations are plotted in Figures 5.1-5.12. The percentage risk improvement of the estimator $\delta^I_{0}$ over $\delta_{0}$ have been tabulated in Table 1. It is observed that $\delta^I_{0}$ has considerable risk improvement over the single stage admissible and minimax  estimator $\delta_{0}$. %The following observations are evident from Figures.......%For simulating the MSE and bias values of the the various estimators, let us consider two Gaussian populations $\Pi_1 \equiv N(\mu_{1}, \sigma^2)$, $\Pi_2 \equiv N(\mu_{2}, \sigma^2)$, with different values of $\mu_{1}$ and $\mu_{2}$ and $\sigma=1.$ We first generate random samples of size $N=1000$ from $\Pi_1$ and $\Pi_2$. These random samples represent our populations. Now, we draw samples of size $n_1=5,10,15$, etc. from both the populations and select one of the populations as the best (target) population using the pre specified natural selection rule. Further, we draw the second stage sample of size $n_2=5,10,15$ from the target population. We compute the sample means of the samples extracted in the first and second stages as the $\overline{X}_1,\overline{X}_2$ and $\overline{Y}$ respectively. Further, we compute the simulated MSE and Bias under the squared error loss function \eqref{S2.E2}. \par 
 In Figures 5.1-5.6, we have plotted the simulated MSE values of the estimators $\delta_{M}$, $\delta_{BG}$, $\delta^I_{BG}$, $\delta^{RB}_{0}$ and $\delta_{1}$ for different configurations of $n_1, n_2$ against $\theta$. In Figures 5.7-5.12, we have plotted the simulated Bias values of the various estimators, $\delta_{M}$, $\delta_{BG}$, $\delta^I_{BG}$, $\delta^{RB}_{0}$ and $\delta_{1}$ for different configurations of $n_1, n_2$ against $\theta$. \par 

The Following observations are evident from Figures 5.1-5.12:
\begin{itemize}
%	\item[(i)]  
%	All the four estimators ($\delta_{{e_1}}$, $\delta_{{e_2}}$, $\delta_{{e_3}}$ and $\delta_{{e_4}}$) proposed by \cite{dahiya1974estimation} are uniformly dominated by the improved estimators $\delta^I_{{e_1}}$, $\delta^I_{{e_2}}$, $\delta^I_{{e_3}}$ and $\delta^I_{{e_4}}$ respectively.
	%\item[(i)] The maximum likelihood estimator $\delta_{M}$, which is minimax and admissible, performs better in terms of MSE among all the competing estimators of $\mu_{S}$. In confirmity with the theoretical risk function, it is also noticed that $\delta_{M}$ has constant risk (=$\frac{\sigma^2}{n_1+n_2}$) as shown theoretically by $(3.4).$
	%\item[(iii)] The estimator based on the single stage sample $\delta_{0}(\underline{X})=\overline{X}_S$ and its dominating estimator $\delta^I_{0}(\underline{X})$ given by (3.17) are uniformly dominated by $\delta_{{\widetilde{w}},0}$. 
	\item[(i)] The MSE of the estimator $\delta^I_{BG}$ is nowhere larger than $\delta_{BG}$ (see Figure 5.1-5.6), which confirms our theoretical finding. 
	
	\item[(ii)] In terms of MSE, the maximum likelihood estimator $\delta_{M}$, which is minimax and admissible, performs better than all the estimators of $\mu_{S}$.  For $n_1 \geq n_2 $, the estimator  $\delta_{1}$ beats all the other estimators, except $\delta_M$, in terms of MSE. It is also observed that, $\delta_{1}$ uniformly dominates $\delta^{RB}_{0}$ in terms of MSE.
	
	\item[(iii)] In terms of Bias, the two stage UMVCUE ($\delta_{BG}$) performs better (i.e. it has zero bias) among all the estimators of $\mu_{S}$. After $\delta_{BG}$, $\delta^I_{BG}$ performs better than all the other estimators (see Figure 5.7-5.12).
	\item[(iv)] When both the MSE and the Bias criterion are used to choose an estimator for estimating $\mu_{S}$, $\delta^I_{BG}$ seems to be a good choice.
  \item[(v)] The MSE and Bias of all the competing estimators of $\mu_{S}$ approaches to zero for large values of the sample sizes $n_1$ and $n_2$.
	
\end{itemize}

\begin{table}[h!]
		\textbf{Table 1: Percentage risk improvement of estimator $\delta^I_{0}$ over $\delta_{0}$.}
	\centering

	\begin{tabular}{lcccc}
		\hline
		& \multicolumn{4}{c}{\textbf{Percentage Risk   Improvement}} \\ \cline{2-5} 
		$\theta$ &	$(n_1,n_2)=(5,5)$ &$(n_1,n_2)=(10,5)$ & $(n_1,n_2)=(10,10)$&$(n_1,n_2)=(10,15)$        \\ \hline
		0.00 & 13.83       & 7.10          & 15.01       & 22.17      \\
		0.05 & 15.17       & 6.78          & 15.41      & 21.80        \\
		0.1 & 15.04        &6.92          & 14.80        &21.75     \\
		0.2 & 14.95        & 7.42        & 16.40        &23.89       \\
		0.3 & 17.20        &8.18         & 17.09      & 24.50     \\ 
		0.5 & 17.62        & 8.30         & 17.92       &26.71       \\ 
		1.0 & 16.44        & 4.47        & 10.55         & 18.63     \\ 
		1.5 & 10.16        & 0.69        & 3.33        &  6.37         \\ 
		2.0 & 3.89         &0.063        & 0.51         &  2.05       \\ 
		2.5& 1.67          & 0.00         & 0.049        & 0.44       \\ 
		3.0 & 0.42         & 0.00         & 0.00        & 0.05   \\ 
		\hline
	\end{tabular}
\end{table}
\newpage

\newpage
 \FloatBarrier
\vskip -0.1in
\begin{figure}[!h]
	\centering
	\includegraphics[height=3.6in, width=4.6in]{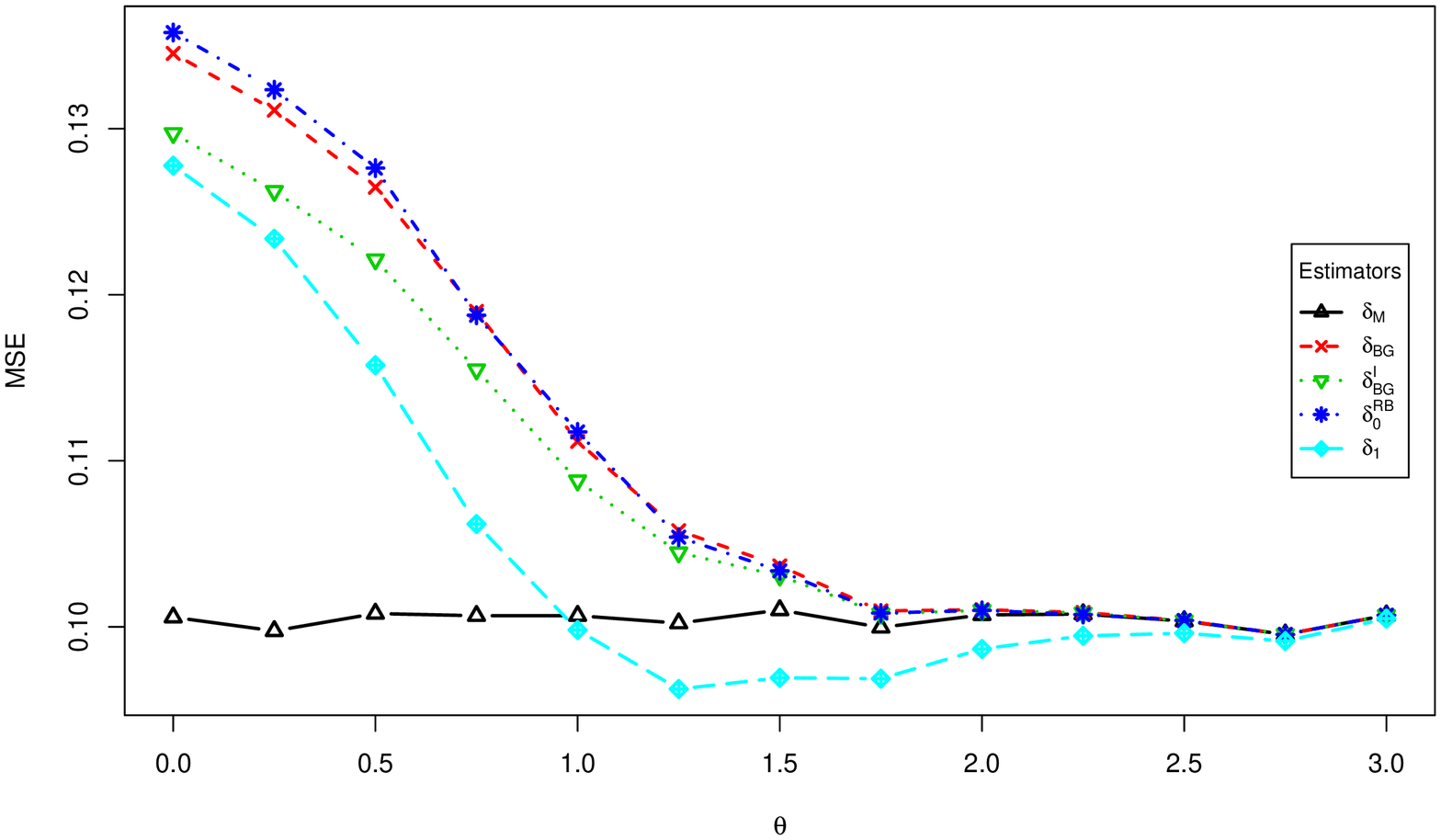}
	\caption{\textbf{Risk plots of different competing estimators for $n_1=5, n_2=5$}}
\end{figure}
\vskip -0.1in
\begin{figure}[!h]
	\centering
	\includegraphics[height=3.6in,width=4.6in]{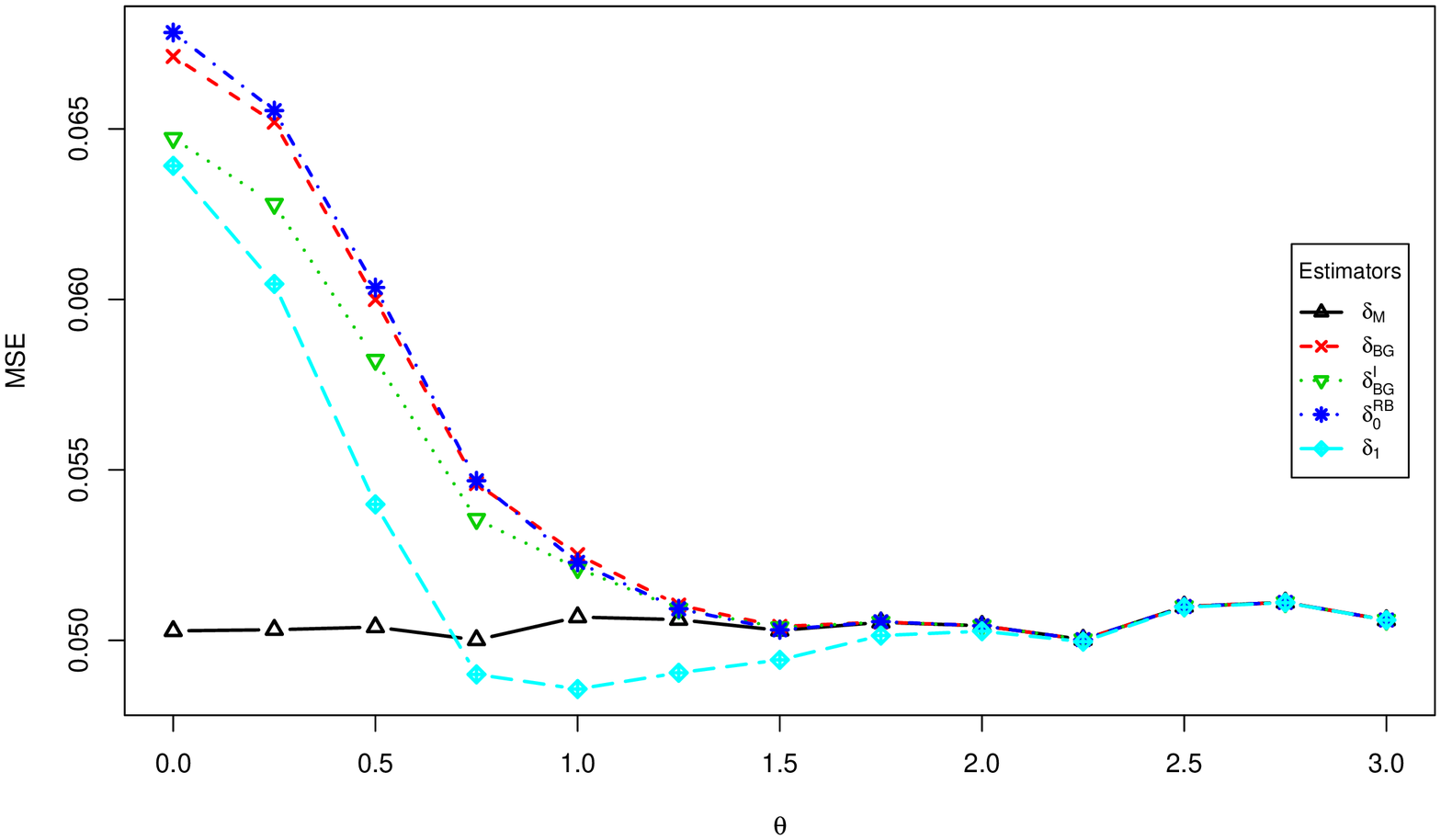}
	\caption{\textbf{Risk plots of different competing estimators for $n_1=10,n_2=10$.}}
\end{figure}
\FloatBarrier
%\FloatBarrier
\vskip -0.1in
\begin{figure}[!h]
	\centering
	\includegraphics[height=3.6in,width=4.3in]{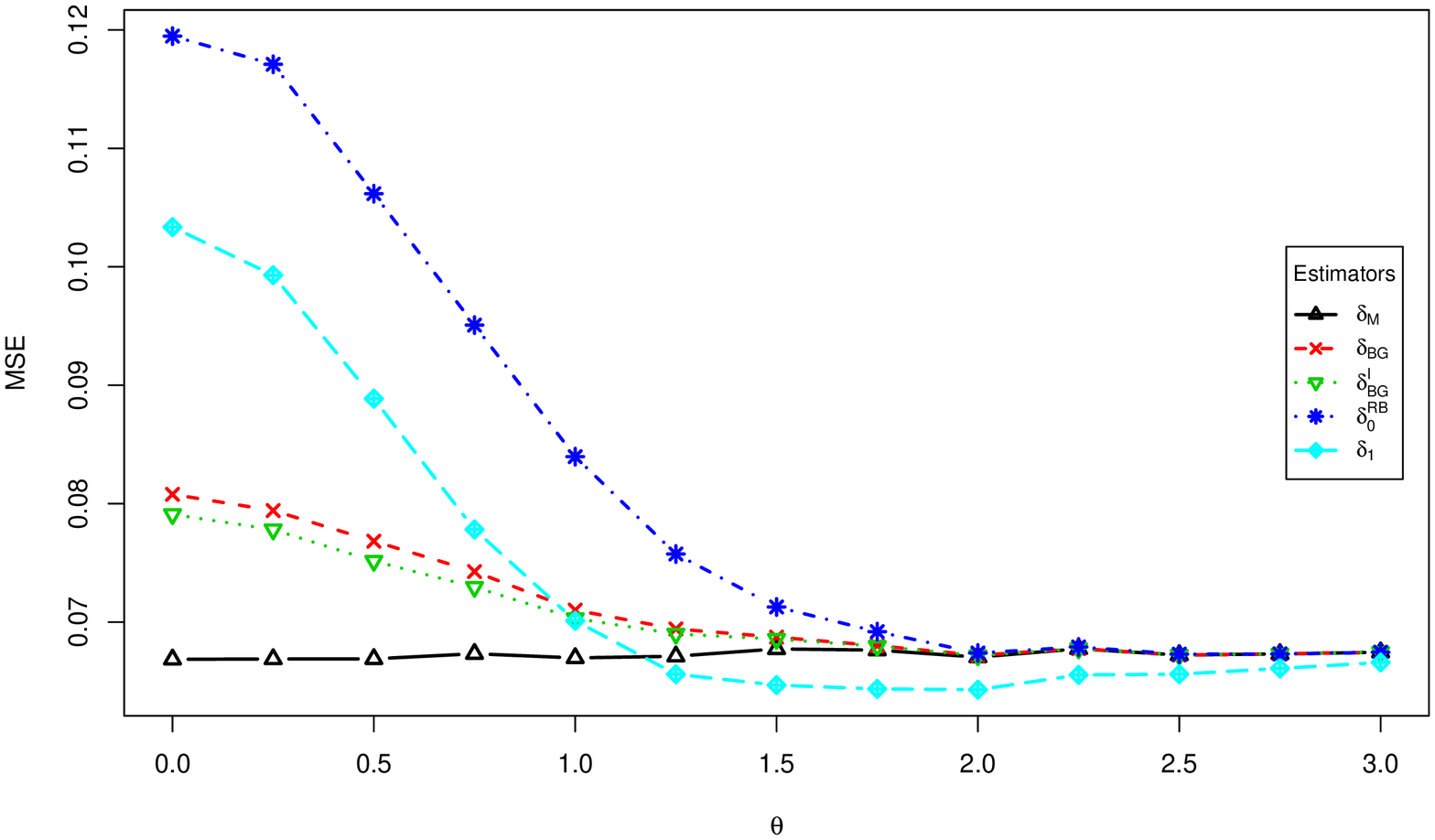}
	\caption{\textbf{Risk plots of different competing estimators for $n_1=5, n_2=10$}}
\end{figure}
\vskip -0.1in
\begin{figure}[!h]
	\centering
	\includegraphics[height=3.6in,width=4.3in]{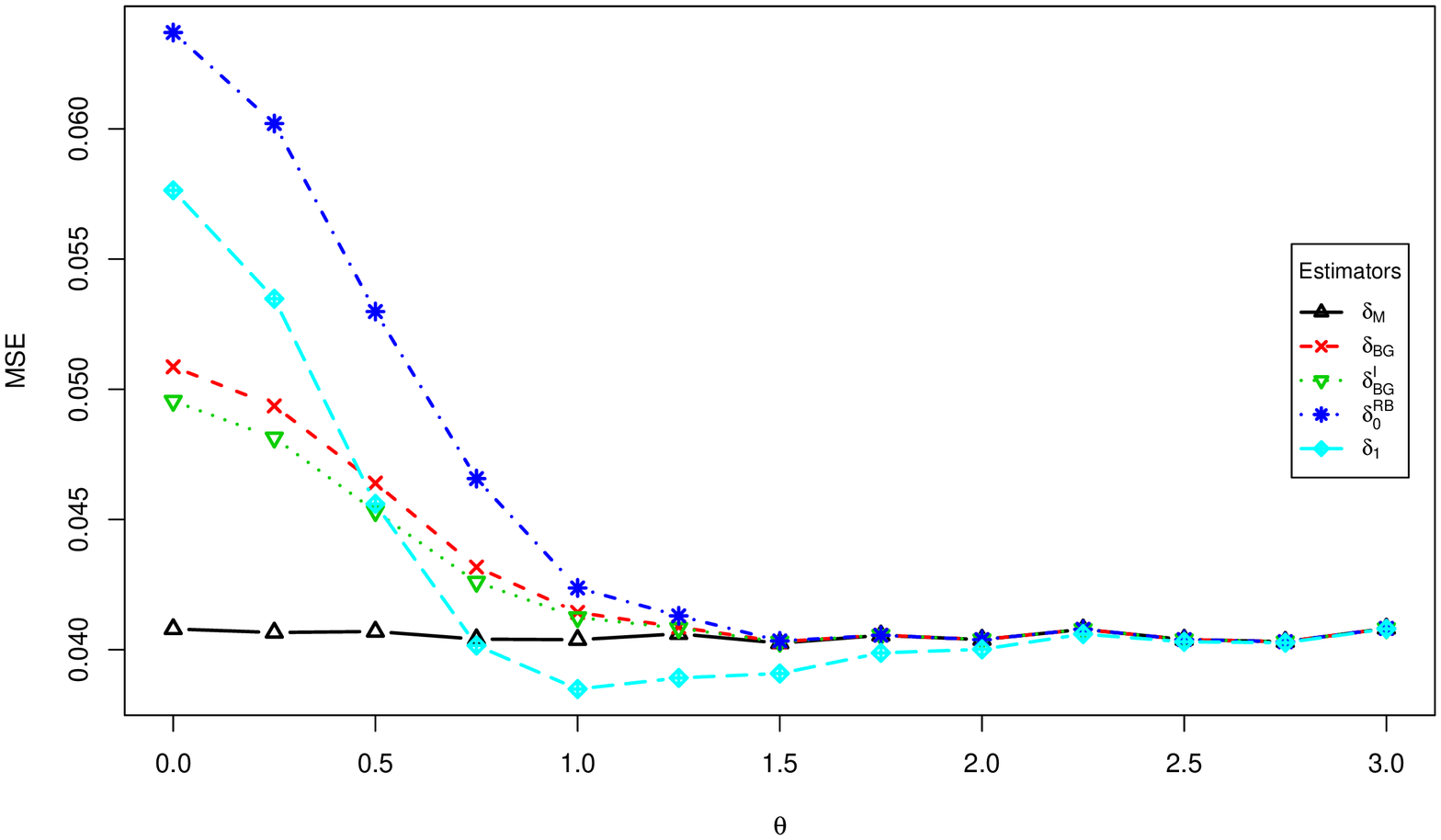}
	\caption{\textbf{Risk plots of different competing estimators for $n_1=10,n_2=15$.}}
\end{figure}
\FloatBarrier
\FloatBarrier
%\FloatBarrier
\vskip -0.1in
\begin{figure}[!h]
	\centering
	\includegraphics[height=3.6in,width=4.3in]{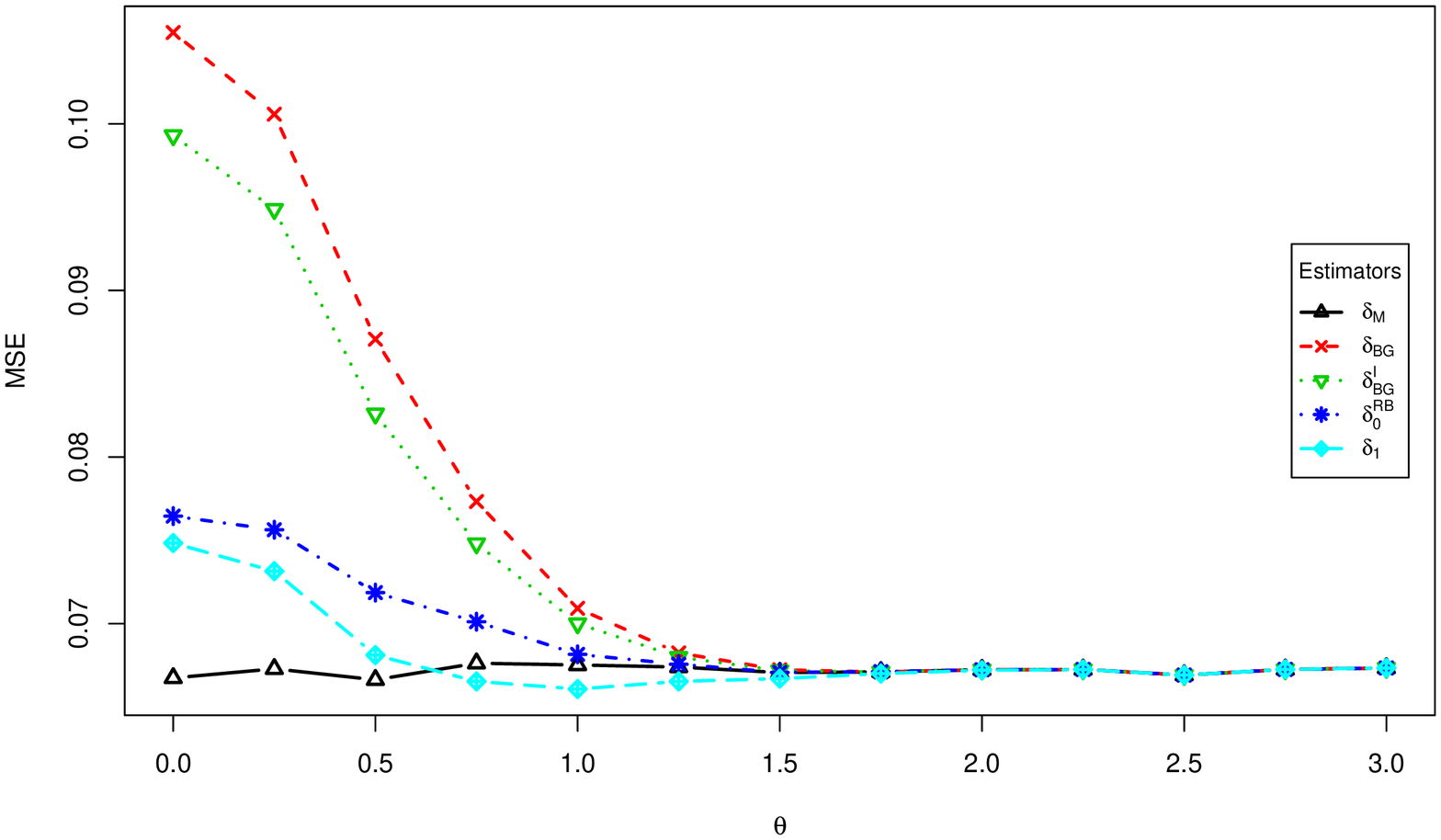}
	\caption{\textbf{Risk plots of different competing estimators for $n_1=10, n_2=5$}}
\end{figure}
\vskip -0.1in
\begin{figure}[!h]
	\centering
	\includegraphics[height=3.6in,width=4.3in]{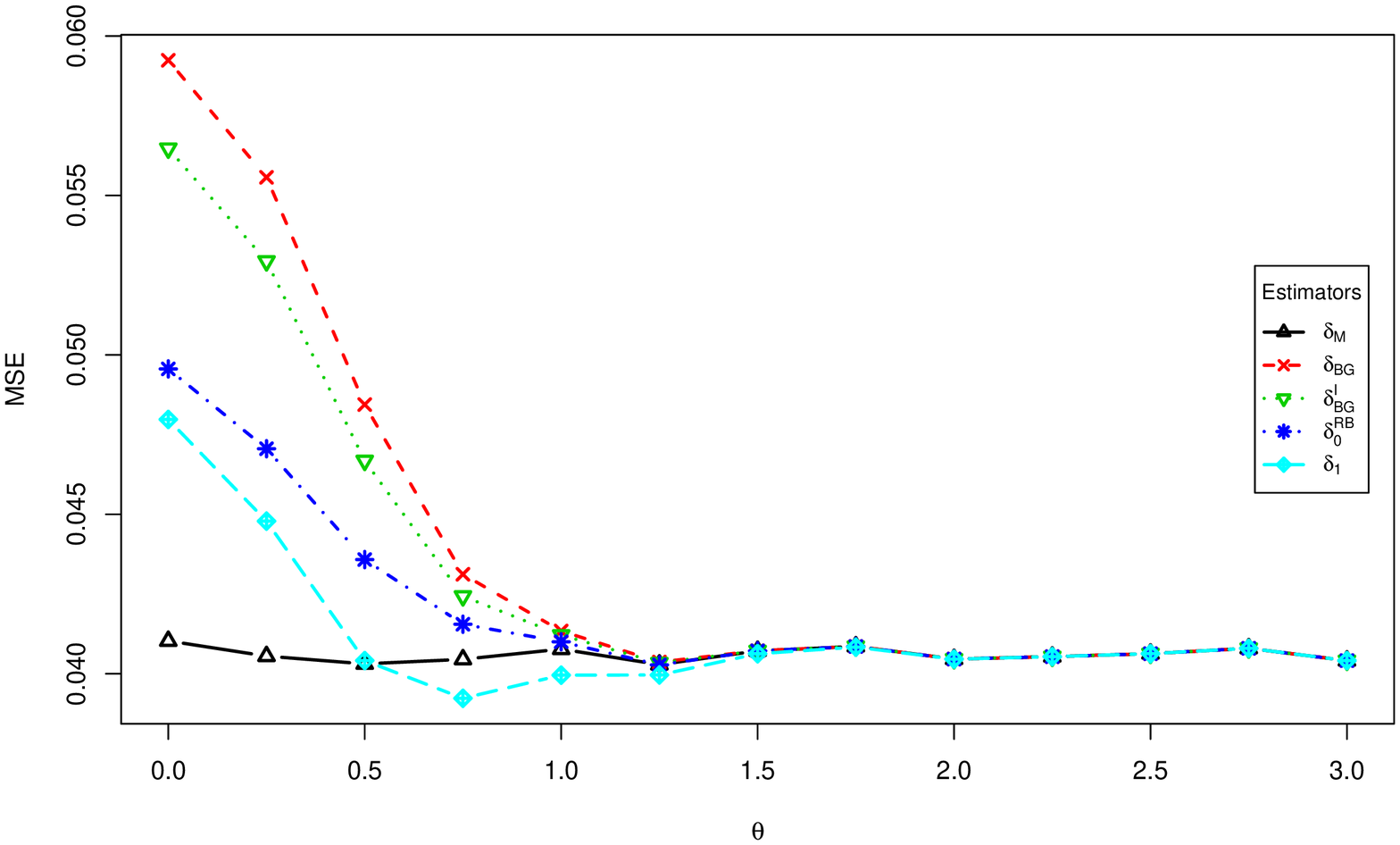}
	\caption{\textbf{Risk plots of different competing estimators for $n_1=15,n_2=10$.}}
\end{figure}
%\FloatBarrier
%\vskip -0.2in
%\begin{figure}[!h]
%	\centering
%	\includegraphics[width=4.4in]{}
%	\caption{\textbf{Risk plots of different competing estimators for $n_1=5, n_2=5$}}
%\end{figure}
%\vskip -0.2in
%\begin{figure}[!h]
%	\centering
%	\includegraphics[width=4.4in]{}
%	\caption{\textbf{Risk plots of different competing estimators for $n_1=10,n_2=5$.}}
%\end{figure}
%\FloatBarrier

%\FloatBarrier
%\vskip -0.2in
%\begin{figure}[!h]
%	\centering
%	\includegraphics[width=4.3in]{}
%	\caption{\textbf{Risk plots of different competing estimators for $n_1=10, n_2=15$}}
%\end{figure}
%\vskip -0.2in
%\begin{figure}[!h]
%	\centering
%	\includegraphics[width=4.3in]{}
%	\caption{\textbf{Risk plots of different competing estimators for $n_1=20,n_2=10$.}}
%\end{figure}
%\FloatBarrier

 \FloatBarrier
 \vskip -0.1in
 \begin{figure}[!h]
 	\centering
 	\includegraphics[height=3.6in,width=4.3in]{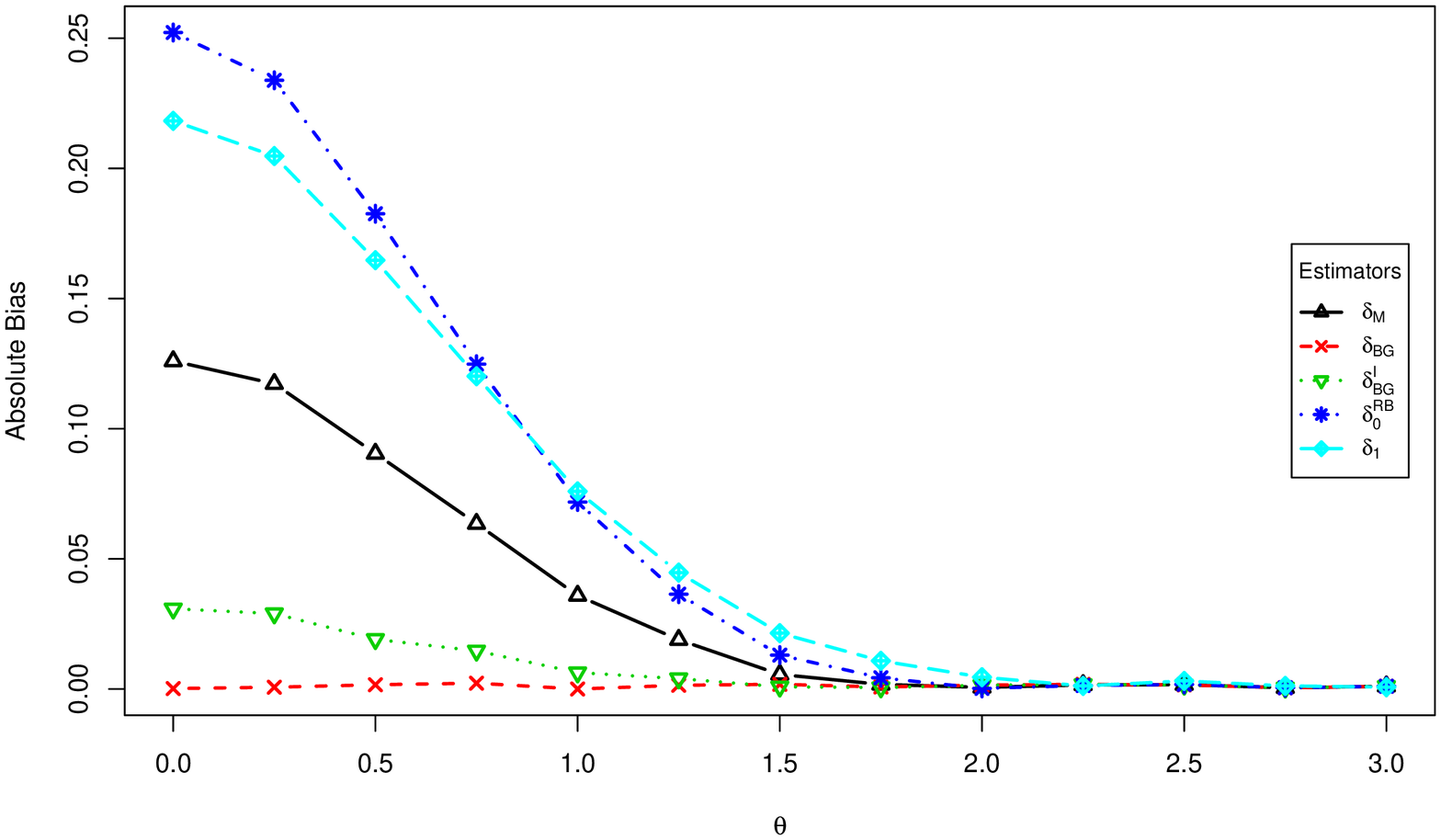}
 	\caption{\textbf{Bias plots of different competing estimators for $n_1=5, n_2=5$}}
 \end{figure}
 \vskip -0.1in
 \begin{figure}[!h]
 	\centering
 	\includegraphics[height=3.6in,width=4.3in]{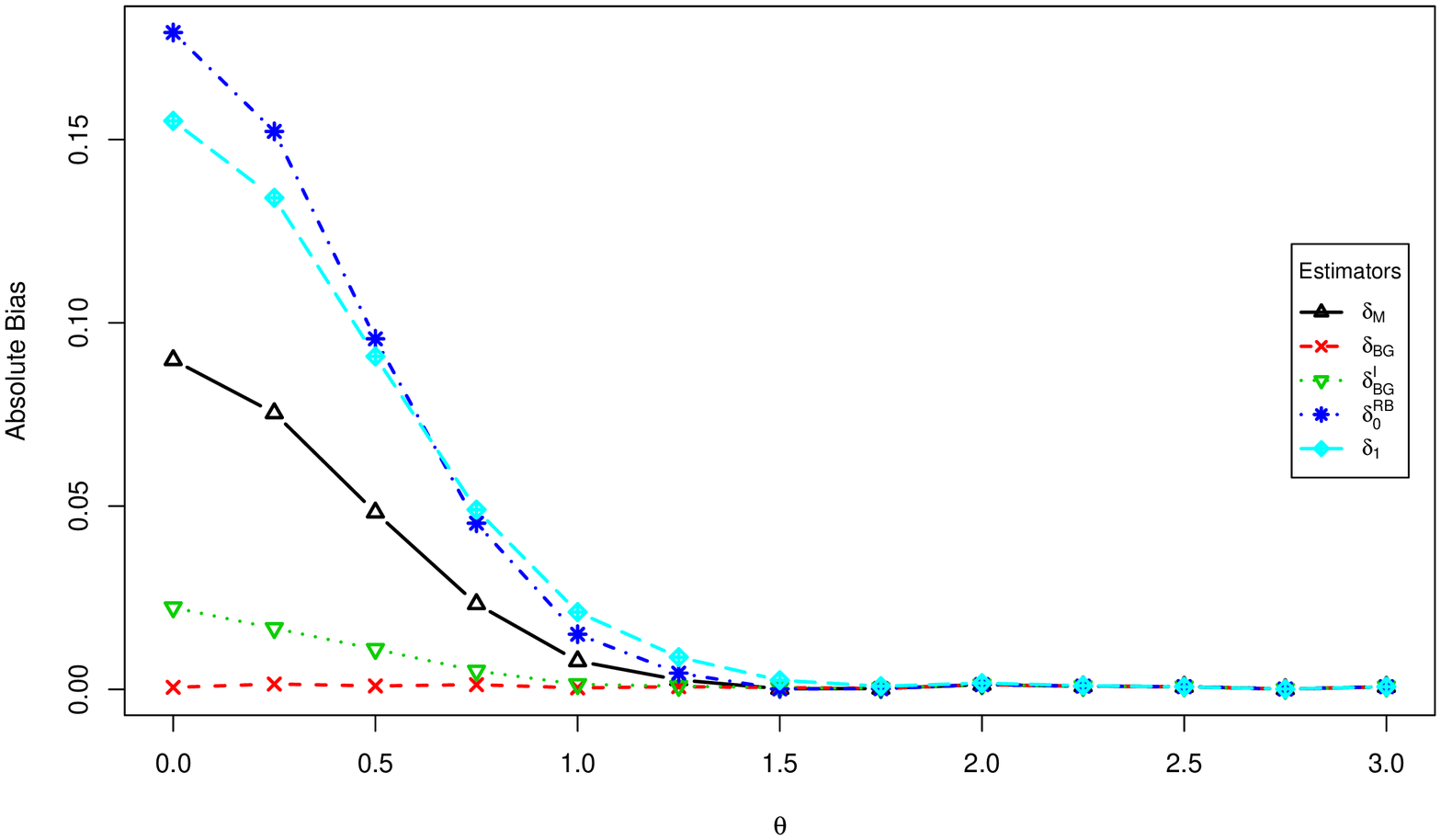}
 	\caption{\textbf{Bias plots of different competing estimators for $n_1=10,n_2=10$.}}
 \end{figure}
 \FloatBarrier
 \FloatBarrier
 \vskip -0.1in
 \begin{figure}[!h]
 	\centering
 	\includegraphics[height=3.6in,width=4.3in]{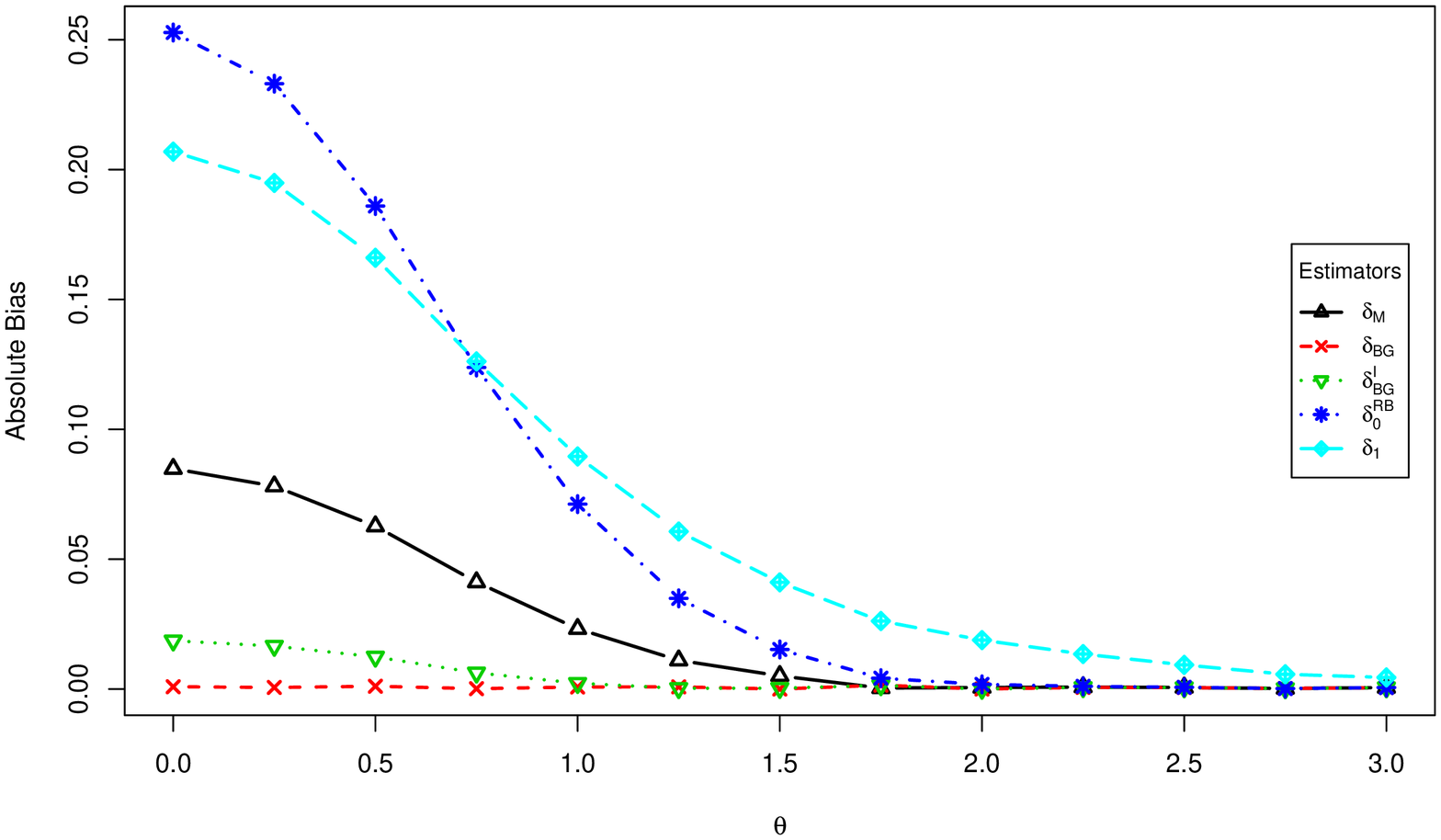}
 	\caption{\textbf{Bias plots of different competing estimators for $n_1=5, n_2=10$}}
 \end{figure}
 \vskip -0.1in
 \begin{figure}[!h]
 	\centering
 	\includegraphics[height=3.6in,width=4.3in]{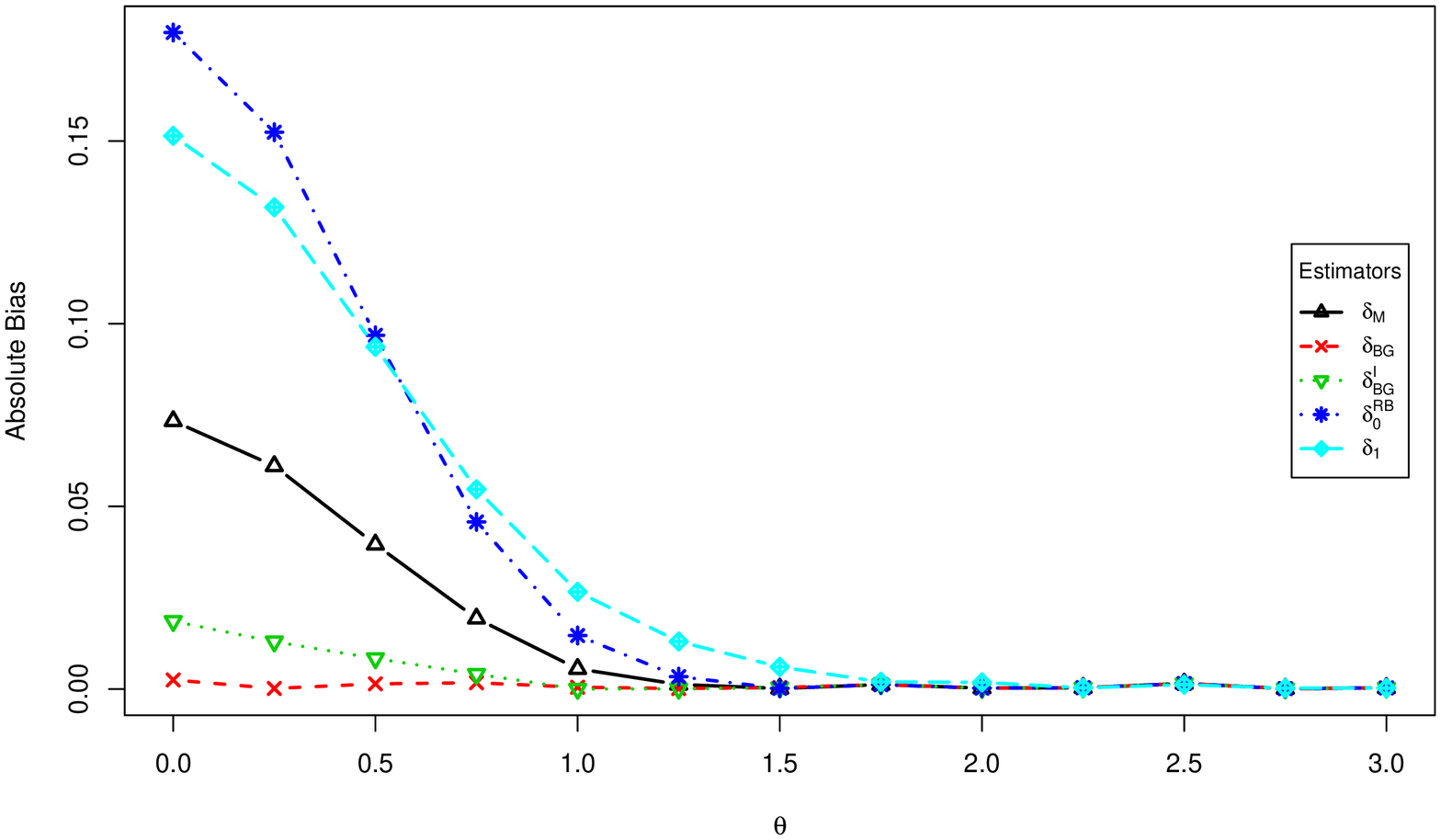}
 	\caption{\textbf{Bias plots of different competing estimators for $n_1=10,n_2=15$.}}
 \end{figure}
 \FloatBarrier
 \FloatBarrier
 \vskip -0.1in
 \begin{figure}[!h]
 	\centering
 	\includegraphics[height=3.6in,width=4.3in]{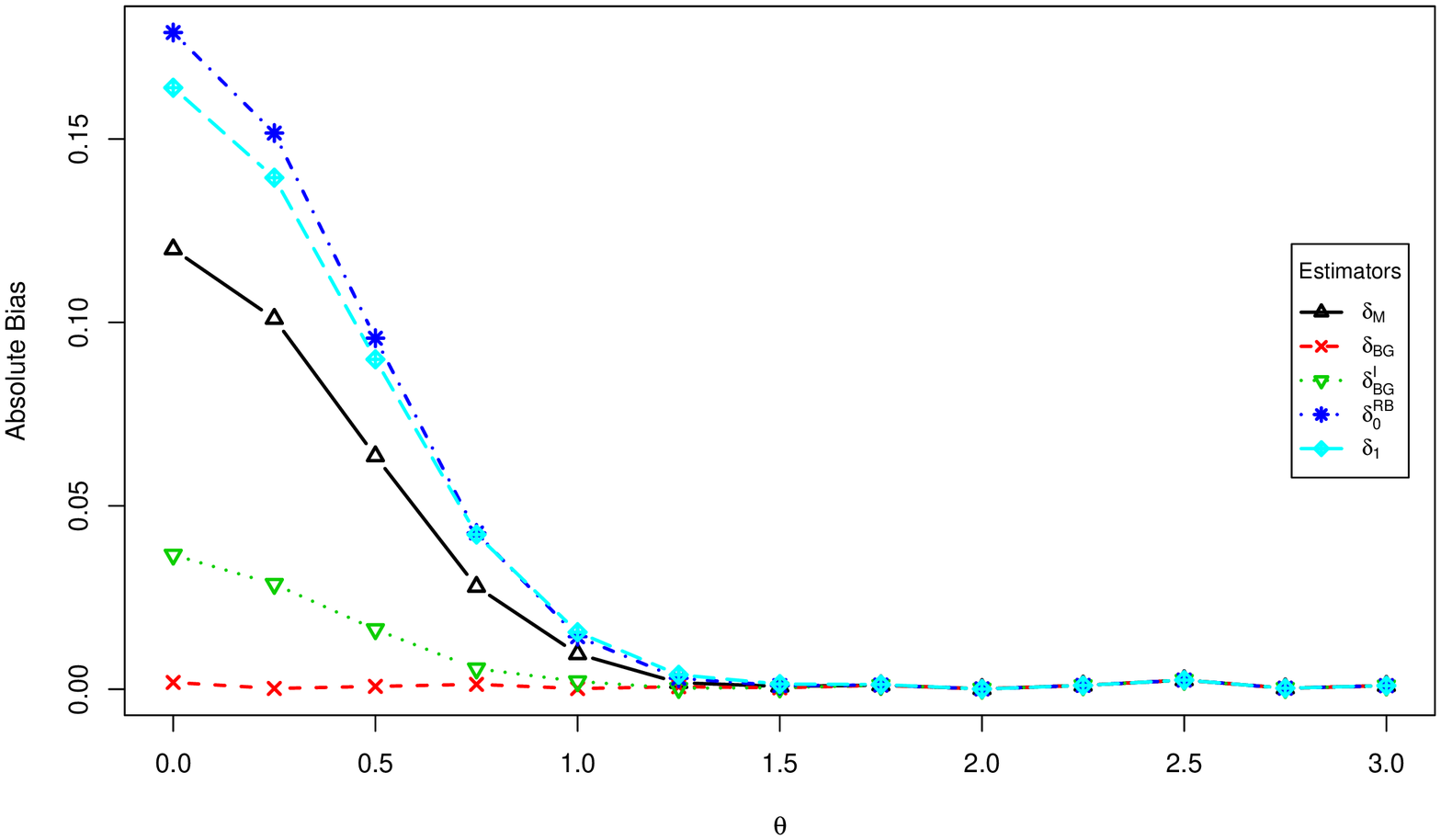}
 	\caption{\textbf{Bias plots of different competing estimators for $n_1=10, n_2=5$}}
 \end{figure}
 \vskip -0.1in
 \begin{figure}[!h]
 	\centering
 	\includegraphics[height=3.6in,width=4.3in]{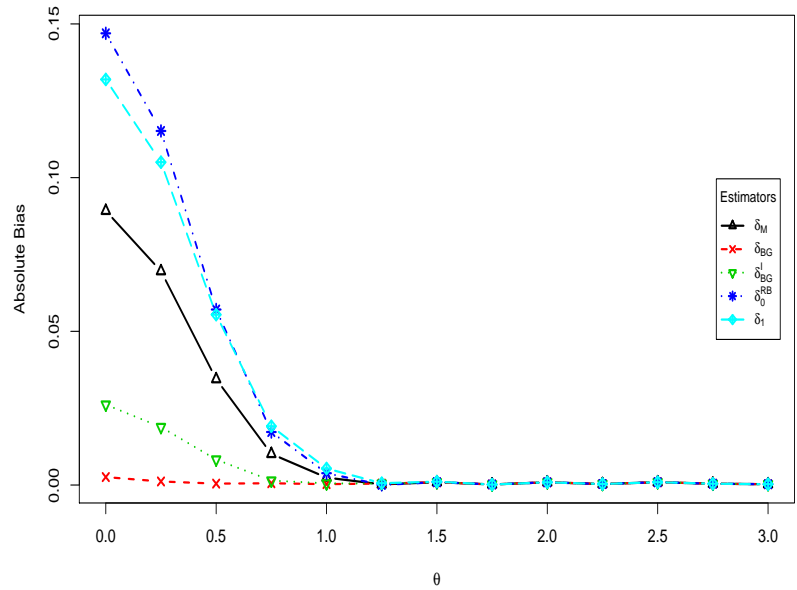}
 	\caption{\textbf{Bias plots of different competing estimators for $n_1=15,n_2=10$.}}
 \end{figure}
 \FloatBarrier
 \section{Real data example}
 In this section, we illustrate the implementation of the findings our  paper to a data set. For the purpose of illustration, we take a part of data set given in \cite{abdalghani2021adjusted} (one may also refer the book by \cite{daniel2018biostatistics}).  We have presented it in Table 2 below. The data reflects the serum concentrations (treatment effects) of binding protein-3 after use  of growth hormone (GH) and insulin-like growth factor I (IGF-I) on biochemical markers of bone metabolism in patients of idiopathic osteoporosis.
 Let $\Pi_1$ and $\Pi_2$ denote the populations receiving the treatments GH and IGF-I, respectively. As verified in \cite{abdalghani2021adjusted}, the  data are assumed to be normally distributed with different means and a common variance. 
 \FloatBarrier
 \begin{table}[h!]
 	\centering
 	\textbf{Table 2}\\
 	\textbf{Serum concentrations of binding protein-3 after the treatment GH.}\\\textbf{Stage I data}
 	\begin{tabular}{lllllllllll}
 		4507 & 4072 & 3036 &  & 2484 & 3540 &  & 3480 & 2055 &  &   \\
 		4095 & 2315 & 1840 &  & 2483 & 2354 &  & 3178 & 3574 &  &   \\
 		3196 & 2365 & 4136 &  & 3088 & 3464 &  & 5874 & 2929 &  &   \\
 		3903 & 3367 & 2938 &  & 4142 & 4465 &  & 3967 & 4213 &  &   \\
 		4321 & 4990 & 3622 &  & 6800 & 6185 &  & 4247 & 4450 &  &   \\
 		4199 & 5390 & 5188 &  & 4788 & 4602  
 	\end{tabular}
 \end{table}
 %\vskip -0.2in
 \begin{table}[h!]
 	\centering
 	\textbf{Serum concentrations of binding protein-3 after the use of  IGF-I treatment.}\\	\textbf{Stage I data}
 	\begin{tabular}{lllllllllll}
 		3480 & 3515 & 4003 & &3667 & 4263 & &4797 & 2354 & &   \\
 		3570 & 3630 & 3666 & &2700 & 2782 & &3088 & 3405  & &   \\
 		3309 & 3444 & 2357 & &3831 & 2905 & &2888 & 2797  &  &   \\
 		3083 & 3376 & 3464 & &4990 & 4590 & &2989 & 4081  &  &   \\
 		4806 & 4435 & 3504 & &3529 & 4093 & &4114 & 4445  &  &   \\
 		3622 & 5130 & 4784 & &4093 & 4852 	
 	\end{tabular}
 \end{table}
 \begin{table}[h!]
	\centering
	\textbf{Serum concentrations of binding protein-3 after the treatment GH.}\\\textbf{Stage II data}
	\begin{tabular}{lllllllllll}
		3161 & 4942 & 3222 &  & 2699 & 3514 &  & 2963 & 3228 &  &   \\
		5995 & 3315 & 2919 &  & 3235 & 4379 &  & 5628 & 6152 &  &   \\
		4415 & 5251 & 3334 &  & 3910 & 2304 &  & 4721 & 3700 &  &   \\
		3228 & 2440 & 2698 &  & 5793 & 4926   
	\end{tabular}
\end{table}
\FloatBarrier
 It is worth noting that the quality of the two treatments are assessed  in terms of their average effects. Therefore, the population corresponding the larger value of mean effect ($\max\left\{\mu_1,\mu_2\right\}$) is considered to be more effective. In stage I of the Drop-the-loser design, we first draw a sample of size $n_1$ from the two populations, and choose the population corresponding to larger sample mean $\left(\overline{X}_S=\max\left\{\overline{X}_1,\overline{X}_2\right\}\right)$ and drop the population corresponding to smaller sample mean effects $\left(\overline{X}_{3-S}=\min\left\{\overline{X}_1,\overline{X}_2\right\}\right)$. In stage II, we draw another sample of size $n_2$ from the selected population in stage I and name it as $\overline{Y}$. From the data given in Table 2, for $n_1=40$ and $n_2=26$, the observed average effects corresponding the GH and IGF-I treatments are obtained as: $\overline{X}_1=3846.05$, $\overline{X}_2=3710.775$. Further, we obtain $\overline{X}_S=3846.05$, $\overline{X}_{3-S}=3710.775$, $\overline{Y}=3925.846$, $D_1=-135.275 $, $D_2=79.796$. \par
 The various estimates of the selected treatment mean $\mu_{S}$ are tabulated in Table below:
   %\begin{table}[h!]
   \begin{center}
 		\textbf{Table 3:} \textbf{Various estimates of the selected treatment mean $\mu_{S}$}.
 	\label{tab:table1}\\
 	\begin{tabular}{|l|c|c|c|c|c|c|} \hline% <-- Alignments: 1st column left, 2nd middle and 3rd right, with vertical lines in between
 	 \textbf{$\delta_{M}$} &  \textbf{$\delta_{BG}$} &\textbf{$\delta^I_{BG}$} & \textbf{$\delta_{0}$} & \textbf{$\delta^I_{0}$} & \textbf{$\delta^{RB}_{0}$} & \textbf{$\delta_{1}$} \\
 		  
 		\hline
 		 3877.484 & 3860.262& 3862.575 & 3846.05& 3848.575& 3850.142& 3857.382\\ \hline
 	
 	\end{tabular}
 \end{center}
Therefore, on the basis of above analysis, we conclude that the treatment GH is more effective as compared to the treatment IGF-I and it is recommended for future applications. After applying the treatment GH, if one prefers to use the estimator $\delta_{M}$ then it can be anticipated to have $3877.484$ units serum concentration of binding protein-3.

\section{Concluding Remarks}
In case of single stage sampling alone, estimation following selection of treatments is prone to bias, especially for normally distributed data. In the literature there are results, where it has been shown that no unbiased estimator of the selected treatment mean exists (see \cite{putter1968technical}, \cite{vellaisamy2009note} and \cite {masihuddin2021equivariant}). To overcome this issue, \cite{cohen1989two}, \cite{bowden2008unbiased} and \cite{robertson2019conditionally} have used the two stage adaptive design approach and provided two stage conditionally unbiased estimator. \par Under the criterion of mean squared error, in this paper, we have addressed the problem of efficient estimation of the selected treatment mean under two-stage drop the loser set up. Our main objective was to look for estimators of the selected treatment mean that perform better in terms of the mean squared error. In this direction we have shown that the maximum likelihood estimator, which is the weighted average of the first and second stage sample means (with weights being proportional to the corresponding sample sizes), is minimax and admissible. Under the two-stage DLD, a sufficient condition for inadmissibility of an arbitrary location and permutation equivariant estimator has been derived. As a consequence of which the two stage UMVCUE, proposed by \cite{bowden2008unbiased}, is shown to be inadmissible and a better estimator is obtained.\par 
It would be interesting to explore, if the proposed approach works well for non-normal data and involving more than two treatment groups. We will make attempts in these directions in our future works.

%\FloatBarrier
%\begin{figure}
	
%	\centering
	% include first image
%	\includegraphics[width=7.5in]{pict/rn5.pdf}  
%	\caption{n=5}
	%\label{fig:sub-first}

%\end{figure}

%\FloatBarrier

%\newpage
\bibliographystyle{apalike}
\bibliography{ref3}

\begin{thebibliography}{}

\bibitem[Abdalghani et~al., 2021]{abdalghani2021adjusted}
Abdalghani, O., Arshad, M., Meena, K., and Pathak, A. (2021).
\newblock Adjusted bias and risk for estimating treatment effect after
  selection with an application in idiopathic osteoporosis.
\newblock {\em Optimal Decision Making in Operations Research and Statistics:
  Methodologies and Applications}, page 370.

\bibitem[Bahadur and Goodman, 1952]{bahadur1952impartial}
Bahadur, R.~R. and Goodman, L.~A. (1952).
\newblock Impartial decision rules and sufficient statistics.
\newblock {\em The Annals of Mathematical Statistics}, pages 553--562.

\bibitem[Bauer et~al., 2016]{bauer2016twenty}
Bauer, P., Bretz, F., Dragalin, V., K{\"o}nig, F., and Wassmer, G. (2016).
\newblock Twenty-five years of confirmatory adaptive designs: opportunities and
  pitfalls.
\newblock {\em Statistics in Medicine}, 35(3):325--347.

\bibitem[Bauer and Kieser, 1999]{bauer1999combining}
Bauer, P. and Kieser, M. (1999).
\newblock Combining different phases in the development of medical treatments
  within a single trial.
\newblock {\em Statistics in medicine}, 18(14):1833--1848.

\bibitem[Bowden and Glimm, 2008]{bowden2008unbiased}
Bowden, J. and Glimm, E. (2008).
\newblock Unbiased estimation of selected treatment means in two-stage trials.
\newblock {\em Biometrical Journal: Journal of Mathematical Methods in
  Biosciences}, 50(4):515--527.

\bibitem[Brewster and Zidek, 1974]{brewster1974improving}
Brewster, J.-F. and Zidek, J. (1974).
\newblock Improving on equivariant estimators.
\newblock {\em The Annals of Statistics}, 2(1):21--38.

\bibitem[Cohen and Sackrowitz, 1989]{cohen1989two}
Cohen, A. and Sackrowitz, H.~B. (1989).
\newblock Two stage conditionally unbiased estimators of the selected mean.
\newblock {\em Statistics \& Probability Letters}, 8(3):273--278.

\bibitem[Dahiya, 1974]{dahiya1974estimation}
Dahiya, R.~C. (1974).
\newblock Estimation of the mean of the selected population.
\newblock {\em Journal of the American Statistical Association},
  69(345):226--230.

\bibitem[Daniel and Cross, 2018]{daniel2018biostatistics}
Daniel, W.~W. and Cross, C.~L. (2018).
\newblock {\em Biostatistics: A Foundation for Analysis in the Health
  Sciences}.
\newblock Wiley.

\bibitem[Eaton, 1967]{eaton1967some}
Eaton, M.~L. (1967).
\newblock Some optimum properties of ranking procedures.
\newblock {\em The Annals of Mathematical Statistics}, 38(1):124--137.

\bibitem[Hwang, 1993]{hwang1993empirical}
Hwang, J.~T. (1993).
\newblock Empirical bayes estimation for the means of the selected populations.
\newblock {\em Sankhy{\=a}: The Indian Journal of Statistics, Series A}, pages
  285--304.

\bibitem[Lu et~al., 2013]{lu2013estimating}
Lu, X., Sun, A., and Wu, S.~S. (2013).
\newblock On estimating the mean of the selected normal population in two-stage
  adaptive designs.
\newblock {\em Journal of Statistical Planning and Inference},
  143(7):1215--1220.

\bibitem[Masihuddin and Misra, 2021]{masihuddin2021equivariant}
Masihuddin and Misra, N. (2021).
\newblock Equivariant estimation following selection from two normal
  populations having common unknown variance.
\newblock {\em Statistics}, pages 1--32.

\bibitem[Misra and Dhariyal, 1994]{misra1994non}
Misra, N. and Dhariyal, I.~D. (1994).
\newblock Non-minimaxity of natural decision rules under heteroscedasticity.
\newblock {\em Statistics and Decisions}, (12):79--98.

\bibitem[Putter and Rubinstein, 1968]{putter1968technical}
Putter, J. and Rubinstein, D. (1968).
\newblock Technical report tr165: on estimating the mean of a selected
  population.
\newblock {\em University of Wisconsin statistics department, Wisconsin}.

\bibitem[Robertson et~al., 2021]{robertson2021point}
Robertson, D.~S., Choodari-Oskooei, B., Dimairo, M., Flight, L., Pallmann, P.,
  and Jaki, T. (2021).
\newblock Point estimation for adaptive trial designs.
\newblock {\em arXiv preprint arXiv:2105.08836}.

\bibitem[Robertson and Glimm, 2019]{robertson2019conditionally}
Robertson, D.~S. and Glimm, E. (2019).
\newblock Conditionally unbiased estimation in the normal setting with unknown
  variances.
\newblock {\em Communications in Statistics-Theory and Methods},
  48(3):616--627.

\bibitem[Sackrowitz and Samuel-Cahn, 1986]{sackrowitz1986evaluating}
Sackrowitz, H. and Samuel-Cahn, E. (1986).
\newblock Evaluating the chosen population: a bayes and minimax approach.
\newblock {\em Lecture Notes-Monograph Series}, pages 386--399.

\bibitem[Sampson and Sill, 2005]{sampson2005drop}
Sampson, A.~R. and Sill, M.~W. (2005).
\newblock Drop-the-losers design: normal case.
\newblock {\em Biometrical Journal: Journal of Mathematical Methods in
  Biosciences}, 47(3):257--268.

\bibitem[Sill and Sampson, 2007]{sill2007extension}
Sill, M.~W. and Sampson, A.~R. (2007).
\newblock Extension of a two-stage conditionally unbiased estimator of the
  selected population to the bivariate normal case.
\newblock {\em Communications in Statistics—Theory and Methods},
  36(4):801--813.

\bibitem[Tappin, 1992]{tappin1992unbiased}
Tappin, L. (1992).
\newblock Unbiased estimation of the parameter of a selected binomial
  population.
\newblock {\em Communications in Statistics-Theory and Methods},
  21(4):1067--1083.

\bibitem[Thall et~al., 1988]{thall1988two}
Thall, P.~F., Simon, R., and Ellenberg, S.~S. (1988).
\newblock Two-stage selection and testing designs for comparative clinical
  trials.
\newblock {\em Biometrika}, 75(2):303--310.

\bibitem[Thall et~al., 1989]{thall1989two}
Thall, P.~F., Simon, R., and Ellenberg, S.~S. (1989).
\newblock A two-stage design for choosing among several experimental treatments
  and a control in clinical trials.
\newblock {\em Biometrics}, pages 537--547.

\bibitem[Vellaisamy, 2009]{vellaisamy2009note}
Vellaisamy, P. (2009).
\newblock A note on unbiased estimation following selection.
\newblock {\em Statistical Methodology}, 6(4):389--396.

\bibitem[Wu et~al., 2010]{wu2010interval}
Wu, S.~S., Wang, W., and Yang, M.~C. (2010).
\newblock Interval estimation for drop-the-losers designs.
\newblock {\em Biometrika}, 97(2):405--418.

\end{thebibliography}
\section*{Appendix}
\textbf{Proof of Lemma 2.2.}: $(i)$ Let us introduce the auxiliary random variables $\overline{Y}_1$ and $\overline{Y}_2$ such that $\overline{X}_1$, $\overline{X}_2$, $\overline{Y}_1$ and $\overline{Y}_2$ are mutually independent and $\overline{Y}_i \sim N\left(\mu_i,\frac{\sigma^2}{n_2}\right),~ i=1,2$. Let $V_1=\frac{\sqrt{n_1}}{\sigma}\left(\overline{X}_1-\mu_{1}\right)$, $V_2=\frac{\sqrt{n_1}}{\sigma}\left(\overline{X}_2-\mu_{2}\right)$, $V_3=\frac{\sqrt{n_2}}{\sigma}\left(\overline{Y}_1-\mu_{1}\right)$, $V_4=\frac{\sqrt{n_2}}{\sigma}\left(\overline{Y}_2-\mu_{2}\right)$ and $w=\frac{n_1}{n_1+n_2}$.  Then, the  c.d.f. of       $U=\frac{n_1\overline{X}_S+n_2\overline{Y}}{n_1+n_2}-\mu_S=w\overline{X}_S+(1-w)\overline{Y} -\mu_{S}$ is
\begin{align}
F_{2,\underline{\mu}}(u)&=\mathbb{P}_{\underline{\mu}}\left(w\overline{X}_S+(1-w)\overline{Y}-\mu_{S} \leq u\right) \nonumber \\
%&=\mathbb{P}_{\underline{\mu}}\left(wZ_2+\frac{\sigma}{\sqrt{n_2}}(1-w)T_3-w\mu_{S} \leq u\right) \nonumber \\
&=\mathbb{P}_{\underline{\mu}}\left(\overline{X}_1 < \overline{X}_2, w\overline{X}_2+(1-w)\overline{Y}_2-\mu_{2} \leq u\right) +\mathbb{P}_{\underline{\mu}}\left( \overline{X}_2 \leq \overline{X}_1, w\overline{X}_1+(1-w)\overline{Y}_1-\mu_{1} \leq u\right) \nonumber\\
&=h_u(\mu_{1},\mu_{2})+h_u(\mu_{2},\mu_{1}),~ -\infty < u < \infty.
\end{align}
Due to symmetry we may assume that $\mu_1 \leq \mu_2$, so that $\mu_2-\mu_1=\theta.$ We have, for $-\infty <u <\infty$
\begin{align}
h_u(\mu_{1},\mu_{2})&=\mathbb{P}_{\underline{\mu}}\left(\overline{X}_1 < \overline{X}_2, w\overline{X}_2+(1-w)\overline{Y}_2-\mu_{2} \leq u\right) \nonumber\\
&=\mathbb{P}_{\underline{\mu}}\left(V_1-V_2 < \frac{\sqrt{n_1}}{\sigma}(\mu_2-\mu_{1}), \frac{w \sigma}{\sqrt{n_1}}V_2+\frac{(1-w) \sigma}{\sqrt{n_2}} V_4 \leq u \right) \nonumber\\
&= \mathbb{P}_{\underline{\mu}}\left(B_1 \leq \frac{\sqrt{n_1}}{\sigma}\theta, B_2 \leq u \right), \theta \geq 0,
\end{align}
where, $B_1=V_1-V_2 \sim N(0,2)$, $B_2=\frac{w \sigma}{\sqrt{n}_1}V_2+\frac{(1-w) \sigma}{\sqrt{n_2}}V_4 \sim N(0,\sigma^2_{*})$, and
$$ \begin{bmatrix}B_1 \\ B_2 \end{bmatrix} \sim N_2\left(\begin{bmatrix}0 \\ 0 \end{bmatrix} , \begin{bmatrix}2 & -\frac{w \sigma}{\sqrt{n_1}} \\ -\frac{w \sigma}{\sqrt{n_1}} & \sigma^2_{*} \end{bmatrix} \right).$$
Then, the conditional distribution of $B_1$ given that $B_2=t~ (t \in \mathbb{R})$, follows $N\left(-\frac{t\sqrt{n_1}}{\sigma}, 1+\rho\right)$ where, $\rho=\frac{n_2}{n_1+n_2}$.
Therefore, $(7.2)$ becomes
\begin{align}
h_u(\mu_{1},\mu_{2})&=
%&=\displaystyle{\int_{-\infty}^{u}} \mathbb{P}_{\underline{\mu}}\left( B_1 \leq \frac{\sqrt{n_1}}{\sigma}\theta \Big| B_2=b\right)\frac{1}{\sigma_{*}} \phi\left(\frac{b}{\sigma_{*}}\right)db\nonumber\\
\displaystyle{\int_{-\infty}^{u}} \Phi\left(\frac{\sqrt{n_1}(t+\theta)}{\sigma\sqrt{1+\rho}}\right)\frac{1}{\sigma_{*}} \phi\left(\frac{t}{\sigma_{*}}\right)dt,~ -\infty < u <\infty
\end{align}
 and, by symmetry, we get
\begin{align}
h_u(\mu_{2},\mu_{1})
%&=\displaystyle{\int_{-\infty}^{u}} \mathbb{P}_{\underline{\mu}}\left( B_1 \leq \theta | B_2=b\right)\frac{1}{\sigma_{*}} \phi\left(\frac{b}{\sigma_{*}}\right)db\nonumber\\
&=\displaystyle{\int_{-\infty}^{u}} \Phi\left(\frac{\sqrt{n_1}(t-\theta)}{\sigma\sqrt{1+\rho}}\right)\frac{1}{\sigma_{*}} \phi\left(\frac{t}{\sigma_{*}}\right)dt,~ -\infty < u <\infty.
\end{align}
Consequently, using (7.1), (7.3) and (7.4), the c.d.f. of $U$ is
\begin{align}
F_{2,\underline{\mu}}(u)&=\displaystyle{\int_{-\infty}^{u}} \left[\Phi\left(\frac{\sqrt{n_1}(t+\theta)}{\sigma\sqrt{1+\rho}}\right)+\Phi\left(\frac{\sqrt{n_1}(t-\theta)}{\sigma\sqrt{1+\rho}}\right)\right]\frac{1}{\sigma_{*}} \phi\left(\frac{t}{\sigma_{*}}\right)dt,~ -\infty < u <\infty.
\end{align}
Hence, the assertion follows.\\
%$(ii)$ For any $ \underline{\mu} \in \Theta $, we have
%\begin{align*}
%\mathbb{E}_{\underline{\mu}}\left(U\right)%&=\displaystyle{\int_{-\infty}^{\infty}} u f_{U}(u)du\\
%&=\displaystyle{\int_{-\infty}^{\infty}} u %\frac{1}{\sigma_{*}}\left[\Phi\left(\frac{\sqrt{n_1}(u+\theta)}{\sigma\sqrt{1+\rho}}\right)+\Phi\left(\f%rac{\sqrt{n_1}(u-\theta)}{\sigma\sqrt{1+\rho}}\right)\right]\phi\left(\frac{u}{\sigma_{*}}\right) du\\
%&=\sigma_{*}\left[\displaystyle{\int_{-\infty}^{\infty}} y \Phi\left(Ay+B\right)\phi(y)dy+ %\displaystyle{\int_{-\infty}^{\infty}} y \Phi\left(Ay-B\right)\phi(y)dy\right] 
%\end{align*}
%where, $A=\frac{\sqrt{n_1}\sigma_{*}}{\sigma\sqrt{1+\rho}}$, %$B=\frac{\sqrt{n_1}\theta}{\sigma\sqrt{1+\rho}}.$

%Using Lemma 3.1(iv), we have,
%$$\displaystyle{\int_{-\infty}^{\infty}} y %\Phi\left(Ay+B\right)\phi(y)dy=\frac{A}{\sqrt{1+A^2}}\phi\left(\frac{B}{\sqrt{1+A^2}}\right)= %\displaystyle{\int_{-\infty}^{\infty}} y \Phi\left(Ay-B\right)\phi(y)dy$$
%Therefore,
%\begin{align*}
%\mathbb{E}_{\underline{\mu}}\left(U\right)&=\sigma_{*}\left[2\frac{A}{\sqrt{1+A^2}}\phi\left(\frac{B}{\s%qrt{1+A^2}}\right)\right]\\
%&=\frac{\sqrt{2n_1}\sigma_{*}}{\sqrt{(n_1+n_2)}} %\phi\left(\frac{\sqrt{n_1}\theta}{\sqrt{2}\sigma}\right).
%\end{align*}
$(ii)$ Let $A=\frac{\sqrt{n_1}\sigma_{*}}{\sigma\sqrt{1+\rho}}$ and $B=\frac{\sqrt{n_1}\theta}{\sigma\sqrt{1+\rho}}$. For any $ \underline{\mu} \in \Theta $, using Lemma $2.1(iii)$we have,
\begin{align*}
\mathbb{E}_{\underline{\mu}}\left(U^2\right)&=\displaystyle{\int_{-\infty}^{\infty}}  \frac{u^2}{\sigma_{*}}\left[\Phi\left(\frac{\sqrt{n_1}(u+\theta)}{\sigma\sqrt{1+\rho}}\right)+\Phi\left(\frac{\sqrt{n_1}(u-\theta)}{\sigma\sqrt{1+\rho}}\right)\right]\phi\left(\frac{u}{\sigma_{*}}\right)du\\
&=\sigma^2_{*}\left[\displaystyle{\int_{-\infty}^{\infty}} y^2 \Phi\left(Ay+B\right)\phi(y)dy+ \displaystyle{\int_{-\infty}^{\infty}} y^2 \Phi\left(Ay-B\right)\phi(y)dy\right] \\
&=\sigma^2_{*}.
\end{align*}

%%%%%%%%%%%%%%%%%%%%%%%%%%%%%%%%%%%%%%%%%%%%%%%%%%%%%%%%%%%%%%%%%%%%%%%%%%%%%%%%%%%%%%%%%%%%%%%%%%%%%%%%%%%%%%%%%%%%%%%%%%%%%%
\textbf{Proof of Lemma 4.1.}: (a)	Let $d_1 \in (-\infty,0],~  d_2 \in \mathbb{R}$ be fixed. Then, the conditional c.d.f. of $S_1$ given $(D_1,D_2)=(d_1,d_2)$ is given by
\begin{align}
F_{1, \underline{\mu}}(s|d_1,d_2)&= \lim\limits_{(h,k) \downarrow (0,0)} \frac{\mathbb{P}_{\underline{\mu}}\left(S_1 \leq s, d_1-h< D_1 \leq d_1, d_2-k < D_2 \leq d_2 \right)}{\mathbb{P}_{\underline{\mu}}\left(d_1-h< D_1 \leq d_1, d_2-k < D_2 \leq d_2 \right)} \nonumber\\ 
&=\lim\limits_{(h,k) \downarrow (0,0)} \frac{N_1(h,k)}{N_2(h,k)}, \text{(say)}.
\end{align}
Let $\overline{Y}_1$, $\overline{Y}_2$, $V_1$, $V_2$, $V_3$ and $V_4$ be as defined in the proof of Lemma 2.2. Also, due to symmetry, we may assume that $\mu_{1} \leq \mu_{2}$, so that $\theta=\mu_{2}-\mu_{1}$. Then, for sufficiently sufficiently small $h>0, k>0$,
\begin{align*}
&N_1\left(h,k\right)\\&=\mathbb{P}_{\underline{\mu}}\left(S_1 \leq s, d_1-h< D_1 \leq d_1, d_2-k < D_2 \leq d_2 \right)\\
&=\mathbb{P}_{\underline{\mu}}\left(\overline{X}_S - \mu_{S} \leq s, d_1-h< \overline{X}_{3-S}-\overline{X}_S \leq d_1, d_2-k < \overline{Y}-\overline{X}_S \leq d_2 \right)\\
&=\mathbb{P}_{\underline{\mu}}\left( \overline{X}_1 > \overline{X}_2,  \overline{X}_1- \mu_1 \leq s, d_1-h<\overline{X}_2-\overline{X}_1 \leq d_1,d_2-k <\overline{Y}_1-\overline{X}_1 \leq d_2\right)\\
& ~\hspace{2mm}+ \mathbb{P}_{\underline{\mu}}\left( \overline{X}_2 \geq \overline{X}_1,  \overline{X}_2- \mu_2 \leq s, d_1-h<\overline{X}_1-\overline{X}_2 \leq d_1,d_2-k <\overline{Y}_2-\overline{X}_2 \leq d_2\right) \\
&=\mathbb{P}_{\underline{\mu}}\left( \overline{X}_1- \mu_1 \leq s, d_1-h<\overline{X}_2-\overline{X}_1 \leq d_1, d_2-k <\overline{Y}_1-\overline{X}_1 \leq d_2\right) \nonumber\\
& ~\hspace{2mm}+ \mathbb{P}_{\underline{\mu}}\left(\overline{X}_2- \mu_2 \leq s, d_1-h<\overline{X}_1-\overline{X}_2 \leq d_1,d_2-k <\overline{Y}_2-\overline{X}_2 \leq d_2\right) \nonumber\\
&=\mathbb{P}_{\underline{\mu}}\left( V_1 \leq \frac{\sqrt{n_1}}{\sigma}s, V_1+\frac{\sqrt{n_1}}{\sigma}\left(d_1-h-\theta \right) < V_2 \leq V_1+\frac{\sqrt{n_1}}{\sigma}\left(d_1-\theta \right),\frac{\sqrt{n_2}}{\sigma}\left(\frac{\sigma}{\sqrt{n_1}}V_1+d_2-k\right) < V_3 \leq \frac{\sqrt{n_2}}{\sigma}\left(\frac{\sigma}{\sqrt{n_1}}V_1+d_2\right)\right) \nonumber\\
&+\mathbb{P}_{\underline{\mu}}\left( V_2 \leq \frac{\sqrt{n_1}}{\sigma}s, V_2+\frac{\sqrt{n_1}}{\sigma}\left(d_1-h+\theta\right) < V_1 \leq V_2+\frac{\sqrt{n_1}}{\sigma}\left(d_1-\theta \right),\frac{\sqrt{n_2}}{\sigma}(\frac{\sigma}{\sqrt{n_1}}V_2+d_2-k)< V_4 \leq \frac{\sqrt{n_2}}{\sigma}\left(\frac{\sigma}{\sqrt{n_1}}V_2+d_2\right)\right) \nonumber\\
&=\displaystyle{\int_{-\infty}^{\frac{\sqrt{n_1}}{\sigma}s}}\left[\Phi\left(v+\frac{\sqrt{n_1}}{\sigma}\left(d_1-\theta\right)\right)-\Phi\left(v+\frac{\sqrt{n_1}}{\sigma}\left(d_1-h-\theta\right)\right)\right] \nonumber\\
&~~~~~~~~~~~~~~~~~~~~~~~~~~~~~~~~~~~~\times \left[\Phi\left(\frac{\sqrt{n_2}}{\sigma}(\frac{\sigma}{\sqrt{n_1}}v+d_2)\right)-\Phi\left(\frac{\sqrt{n_2}}{\sigma}(\frac{\sigma}{\sqrt{n_1}}v+d_2-k)\right)\right] \phi\left(v\right) dv  \nonumber \\
&~~~~~~~+ \displaystyle{\int_{-\infty}^{\frac{\sqrt{n_1}}{\sigma}s}}\left[\Phi\left(v+\frac{\sqrt{n_1}}{\sigma}\left(d_1+\theta\right)\right)-\Phi\left(v+\frac{\sqrt{n_1}}{\sigma}\left(d_1-h+\theta\right)\right)\right] \nonumber\\
&~~~~~~~~~~~~~~~~~~~~~~~~~~~~~~~~~~~~\times \left[\Phi\left(\frac{\sqrt{n_2}}{\sigma}(\frac{\sigma}{\sqrt{n_1}}v+d_2)\right)-\Phi\left(\frac{\sqrt{n_2}}{\sigma}(\frac{\sigma}{\sqrt{n_1}}v+d_2-k)\right)\right] \phi\left(v\right) dv.
%& ~\hspace{2mm}+ \mathbb{P}_{\underline{\mu}}\left(U_2 \leq s, d_1-h<U_1-U_2 -(\mu_2-\mu_1) \leq %d_1,d_2-k +U_2<(\overline{Y} -\mu_2) \leq d_2 +U_2\right)\\
%&=\mathbb{P}_{\underline{\mu}}\left( U_1 \leq s, d_1-h< U_2-U_1 +(\mu_2-\mu_1) \leq d_1,d_2-k +U_1< (\overline{Y}_1 -\mu_1) \leq d_2+U_1\right)\\
%& ~\hspace{2mm}+ \mathbb{P}_{\underline{\mu}}\left(U_2 \leq s, d_1-h<U_1-U_2 -(\mu_2-\mu_1) \leq %d_1,d_2-k +U_2<(\overline{Y} -\mu_2) \leq d_2 +U_2\right)\\
\end{align*}
%where we denote $$\label{S1.E1}
%\overline{Y}_s=\begin{cases}
%\overline{Y}_1, & \text{if $\overline{X}_1  \geq \overline{X}_2$ }\\
%\overline{Y}_2, & \text{if $\overline{X}_1 < \overline{X}_2$}
%\end{cases}$$
Similarly,
\begin{align}
N_2\left(h,k\right)&= \displaystyle{\int_{-\infty}^{\infty}}\left[\Phi\left(t+\frac{\sqrt{n_1}}{\sigma}\left(d_1-\theta\right)\right)-\Phi\left(t+\frac{\sqrt{n_1}}{\sigma}\left(d_1-h-\theta\right)\right)\right] \nonumber\\
&~~~~~~~~~~~~~~~~~~~~~~~~~~~~~~~~~~~~\times \left[\Phi\left(\frac{\sqrt{n_2}}{\sigma}(\frac{\sigma}{\sqrt{n_1}}t+d_2)\right)-\Phi\left(\frac{\sqrt{n_2}}{\sigma}(\frac{\sigma}{n_1}t+d_2-k)\right)\right] \phi\left(t\right) dt  \nonumber \\
&~~~~~~~+ \displaystyle{\int_{-\infty}^{\infty}}\left[\Phi\left(t+\frac{\sqrt{n_1}}{\sigma}\left(d_1+\theta\right)\right)-\Phi\left(t+\frac{\sqrt{n_1}}{\sigma}\left(d_1-h+\theta\right)\right)\right] \nonumber\\
&~~~~~~~~~~~~~~~~~~~~~~~~~~~~~~~~~~~~\times \left[\Phi\left(\frac{\sqrt{n_2}}{\sigma}(\frac{\sigma}{\sqrt{n_1}}t+d_2)\right)-\Phi\left(\frac{\sqrt{n_2}}{\sigma}(\frac{\sigma}{\sqrt{n_1}}t+d_2-k)\right)\right] \phi\left(t\right) dt.
\end{align}
Using $(7.6)$-$(7.8)$, and the L' Hopital rule, we get for fixed $d_1 \in (-\infty,0]$ and $d_2 \in \mathbb{R}$,
\begin{align}
F_{1,\underline{\mu}}(s|d_1,d_2)&=\frac{\displaystyle{\int_{-\infty}^{\frac{\sqrt{n_1}}{\sigma}s}}\left[\phi\left(v+\frac{\sqrt{n_1}}{\sigma}(d_1-\theta)\right)+\phi\left(v+\frac{\sqrt{n_1}}{\sigma}(d_1+\theta)\right)\right]\phi\left(\frac{\sqrt{n_2}}{\sigma}(\frac{\sigma}{\sqrt{n_1}}v+d_2)\right)\phi\left(v\right)dv}{\displaystyle{\int_{-\infty}^{\infty}}\left[\phi\left(t+\frac{\sqrt{n_1}}{\sigma}(d_1-\theta)\right)+\phi\left(t+\frac{\sqrt{n_1}}{\sigma}(d_1+\theta)\right)\right]\phi\left(\frac{\sqrt{n_2}}{\sigma}(\frac{\sigma}{\sqrt{n_1}}t+d_2)\right)\phi\left(t\right)dv},~s \in \mathbb{R}.
\end{align}
Consequently, for any fixed $d_1 \in (-\infty,0]$ and $d_2 \in \mathbb{R}$, the conditional p.d.f. of $S_1$ given $\left(D_1,D_2\right)=(d_1,d_2)$, is given by
\begin{align*}
f_{1, \underline{\mu}}(s|d_1,d_2)&= \frac{\left[\phi\left(\frac{\sqrt{n_1}}{\sigma}(s+d_1-\theta)\right)+\phi\left(\frac{\sqrt{n_1}}{\sigma}(s+d_1+\theta)\right)\right]\phi\left(\frac{\sqrt{n_2}}{\sigma}(s+d_2)\right)\phi\left(\frac{\sqrt{n_1}}{\sigma}s\right)}{\displaystyle{\int_{-\infty}^{\infty}}\left[\phi\left(\frac{\sqrt{n_1}}{\sigma}(t+d_1-\theta)\right)+\phi\left(\frac{\sqrt{n_1}}{\sigma}(t+d_1+\theta)\right)\right]\phi\left(\frac{\sqrt{n_2}}{\sigma}(t+d_2)\right)\phi\left(\frac{\sqrt{n_1}}{\sigma}t\right) dt } ~,s\in \mathbb{R}.\\
&=\frac{  \splitfrac{ e^{\frac{n_1}{(2n_1+n_2)\sigma^2}((n_1+n_2)d_1-n_2d_2)\theta}\frac{\sqrt{2n_1+n_2}}{\sigma}\phi\left(\frac{\sqrt{2n_1+n_2}}{\sigma}\left(s+\frac{n_1d_1+n_2d_2-n_1\theta}{2n_1+n_2}\right)\right)}{+ e^{-\frac{n_1}{(2n_1+n_2)\sigma^2}((n_1+n_2)d_1-n_2d_2)\theta} \frac{\sqrt{2n_1+n_2}}{\sigma}\phi\left(\frac{\sqrt{2n_1+n_2}}{\sigma}\left(s+\frac{n_1d_1+n_2d_2+n_1\theta}{2n_1+n_2}\right)\right)} }{e^{\frac{n_1}{(2n_1+n_2)\sigma^2}((n_1+n_2)d_1-n_2d_2)\theta}+e^{-\frac{n_1}{(2n_1+n_2)\sigma^2}((n_1+n_2)d_1-n_2d_2)\theta}},~ -\infty < s < \infty.
\end{align*}
(b) Using (a), for any fixed $d_1 \in (-\infty,0]$ and $d_2 \in \mathbb{R}$,
\begin{align}
&\mathbb{E}_{\underline{\mu}}(S_1|(D_1,D_2)=(d_1,d_2)) \nonumber\\
&=\frac{e^{\frac{n_1}{(2n_1+n_2)\sigma^2}((n_1+n_2)d_1-n_2d_2)\theta} \left(\frac{n_1\theta-n_1d_1-n_2d_2}{2n_1+n_2}\right)-e^{-\frac{n_1}{(2n_1+n_2)\sigma^2}((n_1+n_2)d_1-n_2d_2)\theta} \left(\frac{n_1\theta+n_1d_1+n_2d_2}{2n_1+n_2}\right)}{e^{\frac{n_1}{(2n_1+n_2)\sigma^2}((n_1+n_2)d_1-n_2d_2)\theta}+e^{-\frac{n_1}{(2n_1+n_2)\sigma^2}((n_1+n_2)d_1-n_2d_2)\theta}}\nonumber \\
&=\frac{n_1 \theta}{2n_1+n_2} \left\{\frac{1-e^{-\frac{2 \theta}{\sigma^2}\left(\frac{n_1((n_1+n_2)d_1-n_2d_2)}{2n_1+n_2}\right)}}{1+e^{-\frac{2 \theta}{\sigma^2}\left(\frac{n_1((n_1+n_2)d_1-n_2d_2)}{2n_1+n_2}\right)}}\right\}-\frac{n_1d_1+n_2d_2}{2n_1+n_2}.
\end{align}

\end{document}